\renewcommand{\footnote}[1]{}
\renewcommand{\thanks}[1]{}
\newtheorem{thm}{Theorem}[section]
\newtheorem{lm}[thm]{Lemma}
\theoremstyle{remark}
\newtheorem{remark}{Remark}
\newcommand{\nc}{\newcommand}
\newcommand{\df}{\def}
\nc{\cit}[2][]{\ifthenelse{\equal{#1}{}}{\rm \cite{#2}}{\rm \cite[#1]{#2}}}
\newlength{\JR@setLa}
\newlength{\JR@setLb}
\newlength{\JR@setLc}
\newlength{\JR@setLd}
\newlength{\JR@setLe}
\nc{\JR@setMta}{}
\nc{\JR@setMtb}{}
\nc{\JR@setMtc}{}
\nc{\JR@setMtd}{}
\nc{\JR@setMt}[1]{
\ifthenelse{#1=1}{\JR@setMta}{}
\ifthenelse{#1=2}{\JR@setMtb}{}
\ifthenelse{#1=3}{\JR@setMtc}{}
\ifthenelse{#1=4}{\JR@setMtd}{}
}
\nc{\JR@max}[3]{\ifthenelse{\lengthtest{\the#1>\the#2}}
{\setlength{#3}{#1}}
{\setlength{#3}{#2}}}
\nc{\JR@setds}{}
\nc{\JR@setSD}[4][]{
\ifthenelse{\equal{#1}{}}
{\renewcommand{\JR@setds}{}}
{\renewcommand{\JR@setds}{\displaystyle}}
\settoheight{\JR@setLa}{\ensuremath{\JR@setds{#2}}}
\settoheight{\JR@setLb}{\ensuremath{\JR@setds{#3}}}
\settodepth{\JR@setLc}{\ensuremath{\JR@setds{#2}}}
\settodepth{\JR@setLd}{\ensuremath{\JR@setds{#3}}}
\settowidth{\JR@setLe}{\ensuremath{\JR@setds{\left.\right.}}}
\JR@max{\JR@setLc}{\JR@setLd}{\JR@setLc}
\JR@max{\JR@setLa}{\JR@setLb}{\JR@setLa}
\addtolength{\JR@setLa}{\JR@setLc}
\ifthenelse{#4=1}{\renewcommand{\JR@setMta}{\rule[-\JR@setLc]{0pt}{\JR@setLa}}}{}
\ifthenelse{#4=2}{\renewcommand{\JR@setMtb}{\rule[-\JR@setLc]{0pt}{\JR@setLa}}}{}
\ifthenelse{#4=3}{\renewcommand{\JR@setMtc}{\rule[-\JR@setLc]{0pt}{\JR@setLa}}}{}
\ifthenelse{#4=4}{\renewcommand{\JR@setMtd}{\rule[-\JR@setLc]{0pt}{\JR@setLa}}}{}
}
\df\SetEx[#1,#2](:#3|#4:){
\JR@setSD[#2]{#3}{#4}{#1}
\left\{\JR@setMt{#1} #3\right.\left|\rule{.9\JR@setLe}{0pt}#4 \JR@setMt{#1}\right\}
}
\nc{\set}[1][1]{\SetEx[#1,ds]}
\nc{\seti}[1][1]{\SetEx[#1,]}
\nc{\M}{\mathbb{M}}
\begin{document}

\author{Jin-ichi Itoh
\and Jo\"{e}l Rouyer
\and Costin V\^{\i}lcu}
\title{Moderate smoothness of \\most Alexandrov surfaces}
\maketitle

\abstract{We show that, in the sense of Baire categories, a typical Alexandrov surface with curvature bounded below by $\kappa$ has no conical points. We use this result to prove that, on such a surface (unless it is flat), 
at a typical point, the lower and the upper Gaussian curvatures are equal to $\kappa$ and $\infty$ respectively.}

\bigskip

{\small Math. Subj. Classification (2010): 53C45}

{\small Key words and phrases: Alexandrov surface, Baire categories, Gaussian curvature}


\section{Introduction}

In this paper, an \textit{Alexandrov surface} will mean a compact
$2$-dimensional Alexandrov space with curvature bounded below, without
boundary. For the precise definition and basic properties, see \cite{BGP} or
\cite{Sh}. It is known that these surfaces are $2$-dimensional topological
manifolds. It is also known that, endowed with the Gromov-Hausdorff distance,
the set of all Alexandrov surfaces, together with their lower dimensional
limits, is a complete metric space (see the next section for details).

By the work of A. D. Alexandrov (for the existence) and A. V. Pogorelov (for
the uniqueness), any Alexandrov surface with curvature bounded below by $0$
and homeomorphic to the sphere $S^{2}$ can be realized as a unique (up to
rigid motions) convex surface (i.e., boundary of a compact convex set with
interior points) in $\mathbb{R}^{3}$. Therefore, the intrinsic geometry of
convex surfaces can be seen as a particular case of the geometry of Alexandrov
surfaces. Nevertheless, due to Pogorelov's rigidity theorem, the proofs for
intrinsic properties of convex surfaces generally involve extrinsic arguments.

\medskip

We recall here a few intrinsic properties of convex surfaces. The space
$\mathcal{S}$ of all convex surfaces, endowed with the usual Pompeiu-Hausdorff
metric, is a Baire space. In any Baire space, a property enjoyed by all
elements except those in a first category set is said to be \emph{typical}. We
also say that \emph{most} elements enjoy that property. For typical properties
of convex surfaces, we refer to \cite{gw} or \cite{z-b}. For Baire category
results in some variations of the space $\mathcal{S}$, see e.g. \cite{z-cc}
and \cite{v}.

The study of typical\textit{\ }properties of convex surfaces started with a
result of V. Klee \cite{k}, rediscovered and completed by P. Gruber
\cite{gru}: \textit{most convex surfaces are of differentiability class
}$C^{1}\setminus C^{2}$\textit{\ and strictly convex}; see also \cite{z-na}.
In particular, most convex surfaces have no conical points. Theorem \ref{sing}
is a generalization of this latter fact to Alexandrov surfaces, and is a key
ingredient in the proof of Theorem \ref{g}.

The description of most convex surfaces was successively improved by several
authors. For example, R. Schneider \cite{s}, T. Zamfirescu \cite{z-cp},
\cite{z-z}, \cite{z-pac}, K. Adiprasito \cite{Ad}, K. Adiprasito and T.
Zamfirescu \cite{AZ_1}, studied lower and upper \emph{directional curvatures}.
Consider a convex surface $S$ of differentiability class $C^{1}$ and a point
$x\in S$. The \emph{lower} and \emph{upper curvature }at $x$ in direction
$\tau$ are defined by $\gamma_{i}^{\tau}(x)=\liminf_{z\rightarrow x}\frac
{1}{r_{z}}$ and $\gamma_{s}^{\tau}(x)=\limsup_{z\rightarrow x}\frac{1}{r_{z}}$
respectively, where $r_{z}$ is the radius of the circle through $x$ and $z\in
S$ whose center belongs to the line normal to $S$ at point $x$, and $\tau$ is
the direction tangent to $S$ at point $x$ \textquotedblleft
toward\textquotedblright$z$. See any of the aforementioned papers for the
precise definition. T. Zamfirescu proved that, \textit{on most convex surfaces
}$S$\textit{, at each point }$x\in S$\textit{, }$\gamma_{i}^{\tau}%
(x)=0$\textit{ or }$\gamma_{s}^{\tau}(x)=\infty$\textit{, for any tangent
direction }$\tau$\textit{ at }$x$\textit{ }\cite{z-z}\textit{. Moreover, both
equalities hold simultaneously at most point }$x\in S$ \cite{z-cp}\textit{.
Still on most convex surfaces, }$\gamma_{s}^{\tau}(x)=0$\textit{ almost
everywhere, in any tangent direction }$\tau$\textit{ }\cite{z-z}. See
\cite{z-pac} for other results of the same flavour. One can also mention more
recent works on the existence of umbilical points of infinite curvature
\cite{Ad}, \cite{Sch}.

The above results inspired our investigations about Gaussian curvatures of
Alexandrov surfaces (Theorem \ref{g}), even though the proof techniques
involved are very different. Moreover, the notion of directional curvature is
essentially extrinsic, and admits no counterpart in the framework of
Alexandrov surfaces. Adapting the proof of Theorem \ref{g} for convex surfaces
yields a new result in this framework, Theorem \ref{Cor}. It should be noticed
that the relations between directional and Gaussian curvatures of convex
surfaces are hitherto not well understood, and it remains unclear whether
Theorem \ref{Cor} can be deduced from the previous results.

\medskip

T. Zamfirescu \cite{z-edp} discovered that \textit{on most convex surfaces,
most points are interior to no geodesic}, and his result was very recently
extended by K. Adiprasito and himself \cite{AZ_2} to Alexandrov surfaces, thus
showing that \textit{most Alexandrov surfaces are not Riemannian}. This seems
to be the first typical property established for Alexandrov surfaces.

In this paper we present the space of Alexandrov surfaces ($\S $2), and study
the existence of conical points on most Alexandrov surfaces ($\S $3); this
enables us to determine the lower and upper curvatures at most points on most
Alexandrov surfaces ($\S $4).

Another motivation for our paper comes from the work of Y. Machigashira on 
lower and upper curvatures \cite{m}; he proved that, for any Alexandrov surface, 
these curvatures are equal almost everywhere (see $\S $4 for definitions).
Our Theorem \ref{g} shows that, excepting flat surfaces, 
these curvatures are different at most points of most Alexandrov surfaces.

In another paper \cite{JC}, we study the existence of simple closed geodesics
on most Alexandrov surfaces: it depends on both the lower curvature bound and
the topology of surfaces.

\medskip

In any Alexandrov space $A$, a shortest path between two points is called a
\emph{segment}. The open (resp. closed) ball of radius $r$ centered at $x$
will be denoted by $B^{A}(x,r)$ (resp. $\bar{B}^{A}(x,r)$). When no confusion
is possible, the superscript $A$ will be omitted. Given a subset $C$ of $A$,
we denote by $\mu\left(  C\right)  $ its $2$-dimensional Hausdorff measure.
The length of a curve $c$ is denoted by $\ell\left(  c\right) $.

The length of the space of directions at a point $p\in A$ is called the
\emph{total angle} at $p$. The \emph{singular curvature} of $p$, denoted by
$\omega\left(  p\right)  $, is defined as $2\pi$ minus the total angle at $p$.
It is known that $\omega(p)\geq0$. A point with non-zero singular curvature is
said to be \emph{conical}.


\section{The space of Alexandrov surfaces}

The results presented in this section seem to be known, but not so easy to
find in the literature.

If $X$ and $Y$ are compact metric spaces, a correspondence between $X$ and $Y
$ is a relation $R$ such that for any $x\in X$ there is at least one $y\in Y$
satisfying $xRy$, and conversely, for any $y\in Y$ there is at least one $x\in
X$ such that $xRy$. The distortion $\mathrm{dis}\left(  R\right)  $ of $R$ is
defined by
\[
\mathrm{dis}\left(  R\right)  = \sup\left\{  \left\vert d\left(  x_{1},
x_{2}\right)  -d\left(  y_{1},y_{2}\right)  \right\vert | x_{1},x_{2}\in
X,~y_{1},y_{2}\in Y,~x_{1}Ry_{1},~x_{2}Ry_{2} \right\}
\]

One way to define the Gromov-Hausdorff distance $d_{GH}$ is to put
\[
d_{GH}\left(  X,Y\right)  =\frac{1}{2}\inf_{R}\mathrm{dis}\left(  R\right)
\text{,}%
\]
where the infimum is taken over all correspondences between $X$ and $Y$. It is
known that $d_{GH}$ is a metric on the set $\mathfrak{M}$ of all compact
metric spaces up to isometry, and that $\mathfrak{M}$ is complete with respect
to this distance \cite{P}. We will often use the following technical lemma.

\begin{lm}
\label{LSE} \cit{roTA} Let $\left\{  X_{n}\right\}  _{n\in\mathbb{N}}$ be a
sequence of elements of $\mathfrak{M}$ converging to $X$, and $\left\{
\varepsilon_{n}\right\}  _{n\in\mathbb{N}}$ a sequence of positive numbers.
Then there exist a compact metric space $Z$, an isometric embedding
$g:X\rightarrow Z$, and for each positive integer $n$, an isometric embedding
$f_{n}:X_{n}\rightarrow Z$, such that $d_{H}^{Z}\left(  f_{n}\left(
X_{n}\right)  ,g\left(  X\right)  \right)  <d_{GH}\left(  X_{n},X\right)
+\varepsilon_{n}$.
\end{lm}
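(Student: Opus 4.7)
The plan is to glue the $X_n$'s and $X$ into one metric space by means of correspondences that almost realize the Gromov--Hausdorff distances, and then to take the metric completion. The basic building block is the following gluing recipe. Given compact metric spaces $A, B$ and a correspondence $R \subset A \times B$ with distortion $\delta > 0$, the formula
\[
d(a,b) := \inf\bigl\{d_A(a,a') + \tfrac{\delta}{2} + d_B(b,b') : (a',b') \in R\bigr\}, \quad a \in A,\; b \in B,
\]
together with the original metrics on $A$ and $B$, extends to a metric on $A \sqcup B$: the cross-pieces triangle inequality follows from $|d_A(a_1,a_2) - d_B(b_1,b_2)| \le \delta$ whenever $(a_i, b_i) \in R$, and by the correspondence property every $a \in A$ lies within distance $\delta/2$ of some $b \in B$ (and vice versa), so $d_H^{A \sqcup B}(A,B) \le \delta/2$.

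To prove the lemma, I would first replace the given tolerances by $\varepsilon'_n := \min(\varepsilon_n, 1/n)$; it suffices to conclude with $\varepsilon'_n$, with the bonus that $\varepsilon'_n \to 0$. For each $n$, pick by definition of $d_{GH}$ a correspondence $R_n \subset X_n \times X$ with distortion $\delta_n < 2 d_{GH}(X_n, X) + 2\varepsilon'_n$; in particular $\delta_n \to 0$. On the disjoint union $Z_0 := X \sqcup \bigsqcup_n X_n$, define a distance $d$ by keeping the original metric on each piece, applying the recipe above between $X$ and each $X_n$ (using $R_n$ and $\delta_n$), and, for $y \in X_n, z \in X_m$ with $n \ne m$, routing through $X$:
\[
d(y, z) := \inf_{x \in X}\bigl( d(y,x) + d(x,z) \bigr).
\]
Let $Z$ be the metric completion of $Z_0$. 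The natural inclusions $g : X \hookrightarrow Z$ and $f_n : X_n \hookrightarrow Z$ are then isometric embeddings, and by construction $d_H^Z(f_n(X_n), g(X)) \le \delta_n/2 < d_{GH}(X_n,X) + \varepsilon'_n \le d_{GH}(X_n,X) + \varepsilon_n$.

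The two things to verify are that $d$ is a genuine metric and that $Z$ is compact. For the first, the only delicate case of the triangle inequality is a triple straddling three distinct pieces $X_n, X_m, X_p$; this reduces to chaining two single-correspondence triangle inequalities through a common point of $X$, exactly as in the routing prescription. For the second, since $\delta_n \to 0$, the space $Z_0$ is totally bounded: given $\eta > 0$, choose $N$ with $\delta_n/2 < \eta/2$ for all $n \ge N$, and combine a finite $\eta/2$-net of $X$ with finite $\eta$-nets of $X_1, \dots, X_{N-1}$ to obtain a finite $\eta$-net of $Z_0$. The completion $Z$ is then complete and totally bounded, hence compact. The main conceptual subtlety is the need to tighten the $\varepsilon_n$ so that $\delta_n \to 0$; without it, the candidate ambient space $Z$ need not be compact.
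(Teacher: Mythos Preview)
The paper does not supply its own proof of this lemma; it is simply quoted from \cite{roTA}. Your argument is the standard gluing construction and is correct, and your observation that one must first shrink the tolerances so that $\delta_n\to 0$ (otherwise the ambient space need not be totally bounded) is precisely the point of the proof.

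Two minor remarks. First, if $\mathrm{dis}(R_n)=0$ for some $n$ (this happens when $X_n$ is isometric to $X$), your formula with $\delta_n/2=0$ yields only a pseudometric on $X\sqcup X_n$; the fix is to use in place of $\delta_n/2$ any positive number strictly between $\mathrm{dis}(R_n)/2$ and $d_{GH}(X_n,X)+\varepsilon'_n$, which is always possible since $\varepsilon'_n>0$. Second, the completion step is in fact unnecessary: your $Z_0$ is already complete. A Cauchy sequence in $Z_0$ either hits some single piece infinitely often and converges there by compactness, or eventually lives in pieces $X_{n_k}$ with $n_k\to\infty$, in which case it is $\delta_{n_k}/2$-close to a Cauchy sequence in $X$ and hence converges to the latter's limit in $X$. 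Total boundedness plus completeness then gives compactness of $Z_0$ directly.
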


The set $\mathcal{A}^{d}\left(  \kappa\right)  \subset\mathfrak{M}$ of
(isometry classes of) compact Alexandrov spaces of curvature at least $\kappa$
and dimension at most $d$ is known to be closed in $\mathfrak{M}$, and
therefore complete \cite[10.8.25]{bbi}. Hence the set $\mathcal{A}\left(
\kappa\right)  =\mathcal{A}^{2}\left(  \kappa\right)  \backslash
\mathcal{A}^{1}\left(  \kappa\right)  $ of all Alexandrov surfaces is open in
a Baire space, and consequently is itself a Baire space.

$\mathcal{A}\left(  \kappa\right)  $ obviously contains the set $\mathcal{R}%
\left(  \kappa\right)  $ of smooth compact Riemannian surfaces of Gaussian
curvature at least $\kappa$ everywhere.

Polyhedra are other examples of Alexandrov surfaces. Let $\M_{\kappa}$ denote
the simply connected $2$-dimensional manifold with constant curvature $\kappa$. 
A surface $P$ obtained by gluing a finite collection $\left\{T_{i}\right\}$ 
of geodesic triangles of $\M_{\kappa}$ will be called a $\kappa$-\emph{polyhedron}. 
Here, gluing means identifying parts of equal length of their boundaries $\partial T_{i}$, 
in such a way that the resulting topological space is a topological manifold. 
Denote by $\Gamma$ the set of those curves on $P$
which intersects the image of $\bigcup_{i}\partial T_{i}$ in finitely many
points. For $\gamma\in\Gamma$, we define $\ell_{0}\left(  \gamma\right)  $ as
the sum over $i$ of the lengths of the traces of $\gamma$ on each $T_{i}$.
Then, we define a metric on $P$: the distance between two points is the
infimum of $\ell_{0}\left(  \gamma\right)  $ over all those curves $\gamma
\in\Gamma$ joining them. Thus $P$ becomes a length space, and the length
induced by this metric coincides with $\ell_{0}$ for any $\gamma\in\Gamma$. A
point of a $\kappa$-polyhedron which is not the image of some vertex of some triangle
$T_{i}$ admits a neighborhood isometric to a ball of $\M_{\kappa}$. Hence, 
there are at most finitely many points which do not admit such a neighborhood; 
they are called the \emph{vertices} of $P$. For more details, see \cite{AZ}. 
The set of $\kappa$-polyhedra will be denoted by $\mathcal{P} \left( \kappa\right)$.

It's easy to see that a $\kappa$-polyhedron is an Alexandrov surface (with
curvature bounded below by any $\kappa^{\prime}\leq\kappa$) if and only if, at
each vertex $p$, image of the points $p_{1}\in\partial T_{i_{1}}$, \ldots\ ,
$p_{k}\in\partial T_{i_{k}}$, the sum over $j$ of the angles of $\partial
T_{i_{j}}$ at $p_{j}$ is at most $2\pi$.

It is known that Alexandrov surfaces are topological manifolds \cite{BGP}.
Moreover they are $2$-\emph{dimensional manifolds with bounded integral
curvature}, as defined in \cite{AZ}; this follows from the existence of a
curvature measure on Alexandrov surfaces \cite{m}. In \cite{AZ}, it is proved
that any $2$-dimensional manifold with bounded integral curvature can be
decomposed into a finite union of geodesic triangles whose interiors are
disjoint. The following lemma is a particular case of a stronger theorem in
\cite{AZ}.

\begin{lm}
\label{LCP} Let $M$ be a $2$-dimensional manifold with bounded integral
curvature. Let $\tau_{n}=\left\{  T_{n}^{i}\right\}  _{i=1}^{q_{n}}$ be a
decomposition of $M$ into geodesic triangles with disjoint interiors, such
that $\max_{1\leq i\leq q_{n}}\mathrm{diam}\left(  T_{n}^{i}\right)
\underset{n\rightarrow\infty}{\longrightarrow}0$. Denote by $P_{n}$ the
$0$-polyhedron obtained by replacing each triangle $T_{n}^{i}$ by a Euclidean
one with the same side lengths. Then $P_{n}$ converges to $M$ with respect to
the Gromov-Hausdorff convergence.
\end{lm}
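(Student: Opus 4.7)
The plan is to exhibit, for each $n$, an explicit correspondence $R_n\subset M\times P_n$ with distortion going to $0$, so that the definition of $d_{GH}$ recalled in this section immediately yields $d_{GH}(M,P_n)\to 0$. Set $\delta_n=\max_{1\le i\le q_n}\mathrm{diam}(T_n^i)$. Because each Euclidean triangle $\tilde T_n^i\subset P_n$ has the same side-lengths as $T_n^i$, its diameter equals its longest side, so $\mathrm{diam}(\tilde T_n^i)\le\delta_n$ as well. The natural correspondence is
\[
R_n=\{(x,y)\in M\times P_n:\ \exists\, i,\ x\in T_n^i \text{ and } y\in\tilde T_n^i\},
\]
which is a correspondence since the triangles cover both spaces and corresponding triangles are simultaneously occupied.

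Next I would reduce the distortion estimate to a comparison of distances between vertices. Given $(x_k,y_k)\in R_n$ witnessed by indices $i_k$ ($k=1,2$), pick any vertex $v_k$ of $T_n^{i_k}$ and its partner $\tilde v_k$ of $\tilde T_n^{i_k}$; two applications of the triangle inequality yield
\[
\bigl|d_M(x_1,x_2)-d_{P_n}(y_1,y_2)\bigr|\le 4\delta_n+\bigl|d_M(v_1,v_2)-d_{P_n}(\tilde v_1,\tilde v_2)\bigr|.
\]
Hence it suffices to prove that $\max\bigl|d_M(v,v')-d_{P_n}(\tilde v,\tilde v')\bigr|\to 0$ as $n\to\infty$, the maximum running over pairs of vertices of $\tau_n$ under the canonical identification.

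For this key step I would take a shortest path $\gamma$ in $M$ from $v$ to $v'$, list its successive crossings $p_0=v,p_1,\ldots,p_m=v'$ with the $1$-skeleton of $\tau_n$, and replace the segment of $\gamma$ inside each triangle $T_n^{i_j}$ by the Euclidean segment of $\tilde T_n^{i_j}$ joining the points corresponding to $p_{j-1},p_j$ under the arc-length identification of shared edges (which makes sense because $T_n^i$ and $\tilde T_n^i$ share their edge-lengths). The concatenation is an admissible path in $P_n$; the symmetric construction, starting from a shortest path in $P_n$, furnishes a competitor in $M$. Thus the proof reduces to a triangle-by-triangle comparison between the length of a geodesic chord inside $T_n^i$ and the length of the corresponding Euclidean chord inside $\tilde T_n^i$.

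The main obstacle is this last, local, comparison: on a triangle $T_n^i$ containing a sizable atom of singular curvature (think of a conical point of $M$ lying in its interior), the intrinsic geometry can differ markedly from its flat model even when $\mathrm{diam}(T_n^i)\to 0$. Here one exploits that the total absolute curvature $|\omega|(M)$ is finite, so that deviations coming from triangles of large curvature content are controlled \emph{globally} rather than one by one: their number stays bounded while each contributes at most $O(\delta_n)$ to the cumulative length discrepancy, and on triangles of small curvature content the angle defect, hence the chord-length deviation, tends uniformly to $0$. This is exactly the content packaged in the stronger approximation theorem of \cite{AZ}, of which the present lemma is a particular case, and what I would invoke to close the estimate.
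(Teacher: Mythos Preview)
The paper does not actually prove this lemma: immediately before the statement it remarks that the lemma ``is a particular case of a stronger theorem in \cite{AZ}'', and no argument is supplied. Your proposal therefore already goes further than the paper does. The correspondence $R_n$ you build and the edge-crossing device (break a segment at its intersections with the $1$-skeleton and replace each sub-arc by its counterpart in the comparison triangle) are precisely the tools the authors deploy in the proof of the \emph{next} lemma, Lemma~\ref{LDP}, so your approach is very much in their spirit. At the point you identify as the real obstacle --- the local chord-length comparison on triangles carrying nontrivial curvature, and the control of the cumulative error along a geodesic that may cross $O(1/\delta_n)$ triangles --- you, like the paper, defer to \cite{AZ}; this is appropriate, since that quantitative approximation theorem is exactly what is needed and is not reproved here.

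Two minor cautions on your sketch. First, a shortest path may run along an edge of $\tau_n$ rather than crossing it transversally, so the list $p_0,\dots,p_m$ of crossing points needs a slightly more careful definition; this is routine. Second, your heuristic that triangles of large curvature content ``stay bounded in number'' is correct for any fixed threshold, but turning this into a length estimate requires balancing the threshold against the number of crossings, which is again precisely the work packaged in \cite{AZ}.
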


\begin{lm}
\label{LDP} $\mathcal{P}\left(  \kappa\right)  \cap\mathcal{A}\left(
\kappa\right)  $ is dense in $\mathcal{A}\left(  \kappa\right)  $.
\end{lm}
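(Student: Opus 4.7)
The plan is to approximate an arbitrary $A\in\mathcal{A}\left(\kappa\right)$ by a sequence of $\kappa$-polyhedra lying in $\mathcal{A}\left(\kappa\right)$. Fix a sequence of triangulations $\tau_{n}=\{T_{n}^{i}\}_{i}$ of $A$ by geodesic triangles with disjoint interiors and diameters tending to zero uniformly, as provided by the decomposition theorem of \cite{AZ} already invoked in Lemma \ref{LCP}. For each $i$, let $\tilde{T}_{n}^{i,\kappa}$ be the geodesic triangle in $\M_{\kappa}$ having the same side lengths as $T_{n}^{i}$ (this triangle exists as soon as the diameter is small enough, which eventually holds even when $\kappa>0$), and glue the $\tilde{T}_{n}^{i,\kappa}$ along corresponding edges to obtain a $\kappa$-polyhedron $P_{n}\in\mathcal{P}\left(\kappa\right)$.

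The key non-trivial step is to verify that $P_{n}\in\mathcal{A}\left(\kappa\right)$. By the characterization recalled above (sum of angles at each vertex $\leq 2\pi$), it suffices to show that at each vertex $v$ of $P_{n}$, the sum $\Sigma(v)$ of the angles of the $\tilde{T}_{n}^{i,\kappa}$ meeting at $v$ is at most $2\pi$. Here the hypothesis on $A$ is used: the Alexandrov angle comparison (valid because $A$ has curvature $\geq\kappa$) asserts that each angle of $T_{n}^{i}$ is at least the corresponding angle of $\tilde{T}_{n}^{i,\kappa}$. Summing over the triangles meeting at the preimage $p\in A$ of $v$ yields $\Sigma(v)\leq 2\pi-\omega(p)\leq 2\pi$, since the $A$-angles of the triangles surrounding $p$ sum to the total angle at $p$.

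It remains to establish $d_{GH}(P_{n},A)\to 0$. Let $P_{n}^{0}$ be the Euclidean polyhedron produced by Lemma \ref{LCP} from the same triangulation $\tau_{n}$; since $P_{n}^{0}\to A$ by that lemma, it is enough to prove $d_{GH}(P_{n},P_{n}^{0})\to 0$. The triangles $\tilde{T}_{n}^{i,\kappa}$ and $\tilde{T}_{n}^{i,0}$ share side lengths and have diameters tending to zero, so their intrinsic metrics differ by $O(|\kappa|\cdot\mathrm{diam}^{3})$; mapping each $\tilde{T}_{n}^{i,0}$ onto $\tilde{T}_{n}^{i,\kappa}$ in a way compatible with the gluing gives a natural correspondence between $P_{n}^{0}$ and $P_{n}$ whose distortion one must bound. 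This last control is the main obstacle, since it concerns not only intra-triangle errors but also distances realized by paths crossing many triangles; it is handled, however, by the same $\ell_{0}$-estimate as in the proof of Lemma \ref{LCP}, and so adapts transparently to the $\kappa$-polyhedral setting.
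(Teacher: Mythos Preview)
Your approach mirrors the paper's exactly: triangulate $A$, replace each triangle by the $\M_\kappa$-triangle with the same side lengths, verify the vertex condition via Toponogov angle comparison, invoke Lemma~\ref{LCP} for $P_n^0\to A$, and reduce to $d_{GH}(P_n^0,P_n^\kappa)\to 0$.

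The one soft spot is your justification of this last step. You defer to ``the same $\ell_0$-estimate as in the proof of Lemma~\ref{LCP}'', but that proof is not given in the paper (Lemma~\ref{LCP} is quoted from \cite{AZ}), and in any case it concerns the comparison $A\leftrightarrow P_n^0$ between an Alexandrov surface and a Euclidean polyhedron, not the comparison $P_n^0\leftrightarrow P_n^\kappa$ between two polyhedra of different curvature. The paper carries out this step explicitly: it introduces the function $\Theta^\kappa(A,B,C,s,t)$ giving the distance between two prescribed points on the sides of an $\M_\kappa$-triangle, uses the uniform limit $\Theta^\kappa/\Theta^0\to 1$ as the triangle shrinks, takes as correspondence ``$x$ and $y$ lie in corresponding triangles'', and bounds the distortion by following a segment of $P_n^0$ through the edges it crosses, replacing each sub-arc by its $\M_\kappa$-counterpart to obtain a multiplicative $(1-\varepsilon)$ control on total length. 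Your $O(|\kappa|\cdot\mathrm{diam}^3)$ intra-triangle remark is the right germ, but the local-to-global passage across many triangles is precisely where the work lies, and it is done here rather than borrowed from Lemma~\ref{LCP}.
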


\begin{proof}
Let $S\in\mathcal{A}\left(  \kappa\right)  $. Let $\left\{  P_{n}^{0}\right\}
$ be a sequence of $0$-polyhedra constructed by gluing Euclidean triangles
corresponding to finer and finer triangulations of $S$, and let$\ P_{n}%
^{\kappa}$ be the $\kappa$-polyhedron obtained from $P_{n}^{0}$ by replacing
each Euclidean triangle by a triangle of $\M_{\kappa}$. Since $S\in
\mathcal{A}\left(  \kappa\right)  $, each angle of a face of $P_{n}^{\kappa}$
is not larger than the corresponding angle in $S$. It follows that the sum of
angles glued at each vertex of $P_{n}^{\kappa}$ is less than or equal to the
corresponding sum in $S$, which in turn is less than or equal to $2\pi$. Hence
$P_{n}^{\kappa}\in\mathcal{A}\left(  \kappa\right)  $. By Lemma \ref{LCP},
$P_{n}^{0}$ converges to $S$, so it is sufficient to prove that $d_{GH}\left(
P_{n}^{0},P_{n}^{\kappa}\right)  $ tends to $0$. Let $\Theta^{\kappa}\left(
A,B,C,s,t\right)  $ be the distance on $\M_{\kappa}$ between points $p$ and
$q$, where $a$, $b$, $c$, $p$, $q\in\M_{\kappa}$ are such that (see Figure
\ref{F1})%
\[%
\begin{array}
[t]{ccc}%
d\left(  a,b\right)  =B\text{,} & d\left(  a,p\right)  =sB\text{,} & d\left(
p,b\right)  =\left(  1-s\right)  B\text{,}\\
d\left(  a,c\right)  =C\text{,} & d\left(  a,q\right)  =tC\text{,} & d\left(
q,c\right)  =\left(  1-t\right)  C\text{,}\\
d\left(  b,c\right)  =A\text{.} &  &
\end{array}
\]
It is well-known, that
\[
\lim_{\delta\rightarrow0}\frac{\Theta^{\kappa}\left(  \delta A,\delta B,\delta
C,s,t\right)  }{\Theta^{0}\left(  \delta A,\delta B,\delta C,s,t\right)
}=1\text{,}%
\]
and the convergence is uniform with respect to the variables $A$, $B$, $C$,
$s$, $t$.%

\begin{figure}
[tbh]
\begin{center}
\includegraphics[
height=1.11in,
width=2.0415in
]%
{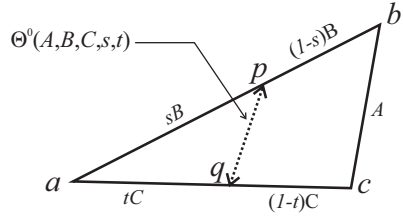}%
\caption{Definition of $\Theta^{\kappa}$.}%
\label{F1}%
\end{center}
\end{figure}

Let $R_{n}$ be the correspondence between $P_{n}^{0}$ and $P_{n}^{\kappa}$
defined as follows: $xR_{n}y$ if and only if $x$ and $y$ belong to
corresponding triangles.

Let $\delta_{n}$ be the maximum of the diameters of the faces of $P_{n}^{0}$.
Choose $\varepsilon>0$. For $n$ large enough,
\[
\frac{\Theta^{0}\left(  \delta_{n}A,\delta_{n}B,\delta_{n}C,s,t\right)
}{\Theta^{\kappa}\left(  \delta_{n}A,\delta_{n}B,\delta_{n}C,s,t\right)
}>1-\varepsilon\text{.}%
\]
Take $x$, $y\in P_{n}^{0}$, $x^{\prime}$, $y^{\prime}\in P_{n}^{\kappa}$ such
that $xRx^{\prime}$ and $yRy^{\prime}$. Let $\sigma_{n}$ be a segment on
$P_{n}^{0}$ between $x$ and $y$; this segment crosses the edges (i.e., sides
of triangles) of the decomposition at the points $x_{n}^{1}$, \ldots,
$x_{n}^{k}$. These points have natural counterparts $x_{n}^{\prime1}$, \ldots,
$x_{n}^{\prime k}$ in $P_{n}^{\kappa}$ (on the same edge, at the same
distances from its endpoints). Let $\sigma_{n}^{\prime}\subset P_{n}^{\kappa}$
be a simple path composed of the segments from $x_{n}^{^{\prime}i}$ to
$x_{n}^{^{\prime}i+1}$ ($i=1,...,k-1$). For $n$ large enough, we have
\begin{align*}
d_{P_{n}^{0}}\left(  x,y\right)  =\ell\left(  \sigma\right)   &  \geq\left(
1-\varepsilon\right)  \ell\left(  \sigma^{\prime}\right)  -2\delta_{n}%
\geq\left(  1-\varepsilon\right)  d_{P_{n}^{\kappa}}\left(  x_{n}^{\prime
1},x_{n}^{\prime k}\right)  -2\delta_{n}\\
&  \geq\left(  1-\varepsilon\right)  d_{P_{n}^{\kappa}}\left(  x^{\prime
},y^{\prime}\right)  -4\delta_{n}\geq\left(  1-2\varepsilon\right)
d_{P_{n}^{\kappa}}\left(  x^{\prime},y^{\prime}\right)  \text{.}%
\end{align*}
An opposite inequality can be obtained in exactly the same way. Hence,
$\mathrm{dis}\left(  R_{n}\right)  $ tends to $0$.
\end{proof}

\begin{lm}
\label{LDR} $\mathcal{R}\left(  \kappa\right)  $ is dense in $\mathcal{A}%
\left(  \kappa\right)  $.
\end{lm}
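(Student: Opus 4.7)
The plan is to combine Lemma \ref{LDP} with a local smoothing of each vertex. By Lemma \ref{LDP}, it suffices to approximate an arbitrary $P \in \mathcal{P}(\kappa) \cap \mathcal{A}(\kappa)$ by elements of $\mathcal{R}(\kappa)$. Every point of $P$ has a neighborhood isometric to an open subset of $\M_{\kappa}$, except for the finitely many vertices $v_1,\dots,v_N$; at each vertex the Alexandrov condition reads $\theta_{v}\leq 2\pi$, where $\theta_v$ is the total angle. For $r_0$ small enough, a metric ball $B(v,r_0)$ is isometric to a truncated $\kappa$-cone of angle $\theta_v$, whose metric in polar coordinates $(r,\phi)\in[0,r_0]\times\mathbb{R}/2\pi\mathbb{Z}$ reads $dr^2 + f_v(r)^2\,d\phi^2$, with $f_v(r)=\frac{\theta_v}{2\pi}s_\kappa(r)$, where $s_\kappa$ is the solution of $s_\kappa''+\kappa s_\kappa=0$, $s_\kappa(0)=0$, $s_\kappa'(0)=1$. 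The Gaussian curvature of this metric is $-f_v''/f_v=\kappa$ off the apex.

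For $\varepsilon\in(0,r_0)$, I replace each ball $B(v,\varepsilon)$ by a rotationally symmetric cap with metric $dr^2+\tilde f(r)^2\,d\phi^2$, where the smooth function $\tilde f\colon[0,\varepsilon]\to\mathbb{R}$ is chosen so that (i) $\tilde f$ agrees with $f_v$, together with all its derivatives, at $r=\varepsilon$, (ii) $\tilde f$ extends as a smooth odd function across $r=0$ (so $\tilde f(0)=0$, $\tilde f'(0)=1$, and all even derivatives vanish at $0$, ensuring smoothness of the cap at the apex), and (iii) $-\tilde f''/\tilde f\geq\kappa$ on $(0,\varepsilon)$. When $\theta_v=2\pi$ there is no singularity to smooth and one takes $\tilde f=f_v$. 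When $\theta_v<2\pi$, the strict inequalities $f_v(\varepsilon)<s_\kappa(\varepsilon)$ and $f_v'(\varepsilon)<s_\kappa'(\varepsilon)$ provide the necessary slack: I will take $\tilde f$ to be the solution of the Jacobi equation $\tilde f''+K(r)\tilde f=0$ with initial data $\tilde f(0)=0$, $\tilde f'(0)=1$, for a smooth function $K\geq\kappa$ on $[0,\varepsilon]$ with $K\equiv\kappa$ near $\varepsilon$, whose profile is tuned so that all compatibility conditions at $r=\varepsilon$ are satisfied.

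Performing this smoothing simultaneously at every vertex, with $\varepsilon$ small enough that the caps are pairwise disjoint and contained in their respective balls $B(v_j,r_0)$, I obtain a smooth Riemannian surface $S_\varepsilon\in\mathcal{R}(\kappa)$ which coincides with $P$ outside a finite union of balls of diameter $O(\varepsilon)$. The identity-away-from-caps correspondence between $P$ and $S_\varepsilon$ then has distortion $O(\varepsilon)$, so that $d_{GH}(P,S_\varepsilon)\to 0$ as $\varepsilon\to 0$.

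The main obstacle is the construction of $\tilde f$ in step (iii): one has to match the entire Taylor expansion of $f_v$ at $r=\varepsilon$ while keeping $K\geq\kappa$ and while preserving the boundary conditions at $r=0$. I expect this to be handled by an explicit perturbation of $s_\kappa$: start from the template $\tilde f_0=\frac{\theta_v}{2\pi}s_\kappa$, which meets (i) but violates the smoothness at $r=0$, and deform it by adding a small positive bump to $K$ supported in a subinterval $[0,\varepsilon/2]$; by the smooth dependence of ODE solutions on parameters, the values of $\tilde f$ and its derivatives at $\varepsilon$ depend smoothly on the deformation, and the implicit function theorem produces an admissible bump. The essential reason this works is precisely that $\theta_v<2\pi$ leaves an open set of admissible initial data in the Jacobi equation.
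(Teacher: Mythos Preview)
Your reduction via Lemma \ref{LDP} and the idea of smoothing each vertex by a warped-product cap $dr^2+\tilde f(r)^2\,d\phi^2$ with $-\tilde f''/\tilde f\ge\kappa$ is exactly the paper's strategy. However, condition (i) as you state it cannot be met, and this is not a technicality that the implicit function theorem will fix.

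Set $\alpha=\theta_v/2\pi<1$ and let $s_\kappa$ be as you defined. You ask for $\tilde f$ solving $\tilde f''+K\tilde f=0$ with $K\ge\kappa$, $K\equiv\kappa$ near $\varepsilon$, $\tilde f(0)=0$, $\tilde f'(0)=1$, and $\tilde f(\varepsilon)=\alpha s_\kappa(\varepsilon)$, $\tilde f'(\varepsilon)=\alpha s_\kappa'(\varepsilon)$ (matching value and first derivative suffices, since both sides then satisfy the same second-order ODE near $\varepsilon$). Consider the Wronskian $W=\tilde f' s_\kappa-\tilde f\,s_\kappa'$. One has $W(0)=0$ and $W'=(\kappa-K)\,\tilde f\,s_\kappa\le 0$ on $(0,\varepsilon)$, so $W(\varepsilon)\le 0$ with equality only if $K\equiv\kappa$. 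But your target forces $W(\varepsilon)=\alpha\bigl(s_\kappa'(\varepsilon)s_\kappa(\varepsilon)-s_\kappa(\varepsilon)s_\kappa'(\varepsilon)\bigr)=0$, hence $K\equiv\kappa$, hence $\tilde f=s_\kappa$, contradicting $\tilde f(\varepsilon)=\alpha s_\kappa(\varepsilon)$ with $\alpha<1$. In other words, the pair $(f_v(\varepsilon),f_v'(\varepsilon))$ lies on the line $\{W=0\}$, which the shooting map can reach only at the single point $(s_\kappa(\varepsilon),s_\kappa'(\varepsilon))$; the ``slack'' you invoke from $f_v(\varepsilon)<s_\kappa(\varepsilon)$ and $f_v'(\varepsilon)<s_\kappa'(\varepsilon)$ points along that line, not transversally to it, so no admissible bump hits the target.

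The paper's proof sidesteps this obstruction by \emph{not} insisting that the cap glue at the original radius. It produces a smooth $f$ with $f=s_\kappa$ near $0$, $-f''/f\ge\kappa$, and $f(r)=\alpha\,s_\kappa(r+\phi)$ for $r$ beyond some small $\tau$, where $|\phi|<\tau$ is an unavoidable phase shift (consistent with $W<0$). Near its boundary the cap is therefore isometric to an annulus of the cone, just with the radial coordinate offset by $\phi$; one glues it into $P$ along the circle where the circumferences match, and the resulting Riemannian surface is Gromov--Hausdorff close to $P$ because $\phi$ is small. If you relax (i) to allow such a phase shift, your outline goes through.
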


\begin{proof}
By Lemma \ref{LDP}, it is sufficient to approximate a ball $B=B\left(
o,R\right)  $ of a $\kappa$-polyhedron, centered at a vertex $o$ of positive
curvature $\omega$, by a Riemannian ball which has constant curvature $\kappa$
near its boundary. By applying a homothety, we can assume that $\kappa=0$ or
$\pm1$. We treat only the case $\kappa=1$, the reader will easily adapt the
proof for cases $\kappa=0$, $-1$.

Note that $B\setminus\{o\}$ is isometric to $]0,R[\times\mathbb{R}%
/2\pi\mathbb{Z}$ equipped with the metric
\begin{equation}
ds^{2}=dr^{2}+\left(  1-\frac{\omega}{2\pi}\right)  ^{2}\sin\left(  r\right)
^{2} d\theta^{2}\text{.} \label{2}%
\end{equation}

For any $\lambda,\varepsilon>0$, we define $k_{\lambda,\varepsilon}%
:\mathbb{R}\rightarrow\mathbb{R}$ by
\[
k_{\lambda,\varepsilon}\left(  t\right)  =\left\{
\begin{array}
[c]{l}%
1\\
\frac{\lambda^{2}}{\varepsilon^{2}}%
\end{array}
\right.
\begin{array}
[c]{l}%
\text{if }t\notin\left[  \varepsilon,2\varepsilon\right]  ,\\
\text{if }t\in\left[  \varepsilon,2\varepsilon\right]  .
\end{array}
\]
There is a unique $C^{1}$ ($C^{\infty}$ on $\mathbb{R}\setminus\left\{
\varepsilon,2\varepsilon\right\}  $) function $f_{\lambda,\varepsilon
}:\mathbb{R}\rightarrow\mathbb{R}$ such that $f\left(  0\right)  =0$,
$f^{\prime}\left(  0\right)  =1$, and, for any $t\neq\varepsilon$,
$2\varepsilon$,
\begin{equation}
f_{\lambda,\varepsilon}^{\prime\prime}\left(  t\right)  +k_{\lambda
,\varepsilon}\left(  t\right)  f_{\lambda,\varepsilon}=0\text{.} \label{1}%
\end{equation}
Explicitly, for $t>2\varepsilon$,
\[
f_{\lambda,\varepsilon}\left(  t\right)  =A_{\lambda,\varepsilon}\sin
t+B_{\lambda,\varepsilon}\cos t,
\]
where
\begin{align*}
A_{\lambda,\varepsilon}  &  =\frac{1}{2\varepsilon\lambda}\left(
\begin{array}
[c]{l}%
3\left(  \varepsilon^{2}-\lambda^{2}\right)  \sin\varepsilon\sin\lambda
\cos^{2}\varepsilon+2\varepsilon\lambda\cos\lambda\cos\varepsilon\\
\hspace{1.45cm}+\sin\varepsilon\left(  \varepsilon^{2}+\lambda^{2}+\left(
\lambda^{2}-\varepsilon^{2}\right)  \sin^{2}\varepsilon\right)  \sin\lambda
\end{array}
\right) \\
B_{\lambda,\varepsilon}  &  =\frac{1}{\varepsilon\lambda}\left(
\cos\varepsilon\left(  \lambda^{2}+\left(  \varepsilon^{2}-\lambda^{2}\right)
\cos2\varepsilon\right)  \sin\lambda-\varepsilon\lambda\cos\lambda
\sin\varepsilon\right)  \text{.}%
\end{align*}
Note that
\begin{align*}
\lim_{\varepsilon\rightarrow0}A_{\lambda,\varepsilon}  &  =\cos\lambda
-\lambda\sin\lambda\text{,}\\
\lim_{\varepsilon\rightarrow0}B_{\lambda,\varepsilon}  &  =0\text{.}%
\end{align*}
If we replace in (\ref{1}) $k_{\lambda,\varepsilon}$ by a smooth function
$k_{\lambda,\varepsilon,\eta}$ which equals $k_{\lambda,\varepsilon}$ outside
$\left[  \varepsilon-\eta,\varepsilon+\eta\right]  \cup\left[  2\varepsilon
-\eta,2\varepsilon+\eta\right]  $, then the corresponding solution of
(\ref{1}) tends to $f_{\lambda,\varepsilon}$ when $\eta$ tends to $0$. It
follows that, for arbitrarily small $\tau>0$, one can find a smooth function
$f$ such that $f=\sin$ on a small enough neighborhood of $0$, $-f^{\prime
\prime}/f\geq1$, and for $t>\tau$, $f\left(  t\right)  =\left(  1-\frac
{\omega}{2\pi}\right)  \sin\left(  t+\phi\right)  $, with $\left\vert
\phi\right\vert <\tau$. Now, the metric $dr^{2}+f\left(  r\right)  ^{2}%
d\theta^{2}$ is a suitable approximation of (\ref{2}).
\end{proof}


\section{Conical points}

The goal of this section is to prove the following result.

\begin{thm}
\label{sing} Most Alexandrov surfaces in $\mathcal{A}\left(  \kappa\right)  $
have no conical points.
\end{thm}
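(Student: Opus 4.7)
The plan is to exhibit the set of Alexandrov surfaces with at least one conical point as a countable union of closed, nowhere dense subsets of $\mathcal{A}(\kappa)$. For each integer $n \geq 1$, define
\[
F_n = \left\{ S \in \mathcal{A}(\kappa) : \text{there exists } p \in S \text{ with } \omega(p) \geq 1/n \right\}.
\]
Since every conical point has $\omega>0$, the surfaces possessing at least one conical point form exactly $\bigcup_{n\geq 1} F_n$, so it suffices to show that each $F_n$ is nowhere dense. That $F_n$ has empty interior is immediate from Lemma \ref{LDR}: the smooth Riemannian surfaces $\mathcal{R}(\kappa)$ are dense in $\mathcal{A}(\kappa)$ and satisfy $\omega\equiv 0$, so $\mathcal{R}(\kappa)\subset \mathcal{A}(\kappa)\setminus F_n$ is dense.

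The substantive step is the closedness of $F_n$. Given a Gromov--Hausdorff convergent sequence $S_k\to S$ with $S_k\in F_n$, pick $p_k\in S_k$ with $\omega(p_k)\geq 1/n$, use Lemma \ref{LSE} to embed all the $S_k$ and $S$ isometrically in a common compact metric space $Z$, and extract a subsequence along which $p_k\to p\in S$. One must then transfer the inequality to the limit. The tool I would invoke is the Bishop--Gromov type monotonicity on a $\kappa$-Alexandrov surface: for any point $q$, the ratio
\[
r\longmapsto \frac{\mu(B(q,r))}{v_\kappa(r)}
\]
is non-increasing and tends to $1-\omega(q)/(2\pi)$ as $r\to 0$, where $v_\kappa(r)$ denotes the area of a metric ball of radius $r$ in $\M_\kappa$. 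Hence the assumption $\omega(p_k)\geq 1/n$ yields the scale-uniform bound
\[
\mu(B(p_k,r)) \leq \left(1-\frac{1}{2\pi n}\right) v_\kappa(r)\qquad\text{for every small } r>0.
\]
Passing $k\to\infty$ by weak convergence of the $2$-dimensional Hausdorff measures under Gromov--Hausdorff convergence of Alexandrov surfaces (valid for all but countably many radii, namely those at which $\mu(\partial B(p,r))=0$), one obtains $\mu(B(p,r))\leq (1-1/(2\pi n))\,v_\kappa(r)$ along a sequence $r\to 0$, and the monotonicity then gives $\omega(p)\geq 1/n$, that is, $S\in F_n$.

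Combining these two steps, each $F_n$ is closed with dense complement, hence nowhere dense, and $\bigcup_n F_n$ is of first category; the complement is a residual subset of $\mathcal{A}(\kappa)$, proving the theorem. The main obstacle is the final limit passage on areas of small balls through Gromov--Hausdorff convergence; this is exactly the place where one must draw on the finer convergence theory of Alexandrov surfaces. The monotonicity of the normalized area ratio plays a crucial stabilising role, because it delivers the inequality at \emph{every} scale and so allows one to pick a radius at which the limit behaves well, sidestepping the difficulty that individual atoms of the curvature measure need not be preserved by weak convergence.
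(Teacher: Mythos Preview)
Your overall architecture matches the paper's exactly: write the set of surfaces with a conical point as $\bigcup_n F_n$ where $F_n=\{S:\exists\,p,\ \omega(p)\ge 1/n\}$, show each $F_n$ is closed with empty interior, and conclude by Baire. The empty-interior step via the density of $\mathcal{R}(\kappa)$ is also identical.

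The genuine difference lies in how you prove closedness. The paper establishes upper semicontinuity of $\omega$ directly by angle comparison (Lemma~\ref{LCrv}\,(ii)): pick three segments from $p$ whose pairwise angles sum to $2\pi-\omega(p)$, approximate them by segments in $A_n$, and use that comparison angles are continuous while actual angles can only increase in the limit. This is elementary and self-contained within the paper. You instead encode $\omega(p)$ through the Bishop--Gromov density $\lim_{r\to 0}\mu(B(p,r))/v_\kappa(r)=1-\omega(p)/(2\pi)$, obtain a scale-uniform area bound on $A_n$, and pass to the limit via convergence of Hausdorff measures under Gromov--Hausdorff convergence of Alexandrov surfaces. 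This is correct (the monotonicity and the density limit are standard, and measure convergence for non-collapsing Alexandrov spaces is known), but it imports heavier external machinery than the paper needs. The paper's angle argument is more economical here; on the other hand, your volume argument has the virtue of generalising verbatim to higher-dimensional Alexandrov spaces, where the three-segment trick does not directly give the full cone angle. One minor slip: the exceptional radii are those with $\mu(\partial B(p,r))>0$, not $=0$.
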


Before proving this theorem, we establish a lemma.

\begin{lm}
\label{LCrv}Let $Z$ be a compact metric space, and $A_{n}$ a sequence of
Alexandrov surfaces, isometrically embedded in $Z$, converging to $A\subset Z$
for the Hausdorff distance of $Z$. Let $p_{n}$ be a point on $A_{n}$ and
assume that $p_{n}$ converges to $p\in A$.

i) If $\sigma_{n}$,$\gamma_{n}$ are segments of $A_{n}$ emanating from $p_{n}$
and converging to segments $\sigma$, $\gamma\subset A$, then $\liminf
\measuredangle\left(  \sigma_{n},\gamma_{n}\right)  \geq\measuredangle\left(
\sigma,\gamma\right)  $.

ii) $\omega\left(  p\right)  \geq\lim\sup\omega\left(  p_{n}\right)  $.

iii) If $\omega\left(  p\right)  =0$ then $\lim\measuredangle\left(
\sigma_{n},\gamma_{n}\right)  =\measuredangle\left(  \sigma,\gamma\right)  $.
\end{lm}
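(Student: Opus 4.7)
The plan for (i) is to exploit the monotonicity of comparison angles under the lower curvature bound $\kappa$. Parametrizing $\sigma,\gamma$ by arclength from $p$, the comparison angle $\tilde\measuredangle_\kappa(\sigma(s),p,\gamma(t))$ in $\M_\kappa$ is non-increasing in $s$ and $t$, and $\measuredangle(\sigma,\gamma)$ is its supremum. Given $\varepsilon>0$ I fix $s,t>0$ realizing this supremum within $\varepsilon$. The Hausdorff convergence in $Z$ together with $p_n\to p$ yields $\sigma_n(s)\to\sigma(s)$ and $\gamma_n(t)\to\gamma(t)$ in $Z$; since each $A_n$ is isometrically embedded in $Z$, this gives $d_{A_n}(\sigma_n(s),\gamma_n(t))\to d_A(\sigma(s),\gamma(t))$, hence $\tilde\measuredangle_\kappa(\sigma_n(s),p_n,\gamma_n(t))\to\tilde\measuredangle_\kappa(\sigma(s),p,\gamma(t))$. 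Combining with the Toponogov hinge inequality $\measuredangle(\sigma_n,\gamma_n)\geq\tilde\measuredangle_\kappa(\sigma_n(s),p_n,\gamma_n(t))$, taking $\liminf$ and letting $\varepsilon\to 0$, closes part (i).

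For (ii), since $\omega=2\pi-\Sigma$ with $\Sigma$ the length of the space of directions, the goal becomes $\Sigma(p)\leq\liminf\Sigma(p_n)$. I will pick a cyclic family of segments $\sigma^1,\ldots,\sigma^k$ from $p$ whose directions partition $\Sigma_p$ into arcs of mesh much smaller than $\Sigma(p)/2$ and satisfy $\sum_i\measuredangle(\sigma^i,\sigma^{i+1})>\Sigma(p)-\varepsilon$; their endpoints $q^i$ are chosen close to $p$ and generic, so that each segment $pq^i$ is unique. Approximating each $q^i$ by $q^i_n\in A_n$ with $q^i_n\to q^i$, I take segments $\sigma^i_n$ from $p_n$ to $q^i_n$ and, via Arzel\`a--Ascoli in $Z$ combined with uniqueness of $pq^i$, pass to a subsequence with $\sigma^i_n\to\sigma^i$. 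Part (i) gives $\liminf_n\measuredangle(\sigma^i_n,\sigma^{i+1}_n)\geq\measuredangle(\sigma^i,\sigma^{i+1})$. For $n$ large the directions of the $\sigma^i_n$ inherit cyclic order on $\Sigma_{p_n}$, whence $\sum_i\measuredangle(\sigma^i_n,\sigma^{i+1}_n)\leq\Sigma(p_n)$; summing and then $\varepsilon\to 0$ finishes (ii).

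For (iii), (ii) forces $\Sigma(p_n)\to 2\pi$; together with (i), the remaining task is $\limsup\measuredangle(\sigma_n,\gamma_n)\leq\measuredangle(\sigma,\gamma)$. I pick a third segment $\tau$ from $p$ whose direction lies at the midpoint of the long arc of $\Sigma_p$ between those of $\sigma$ and $\gamma$, so that $\measuredangle(\sigma,\tau)+\measuredangle(\tau,\gamma)=2\pi-\measuredangle(\sigma,\gamma)$. Approximating $\tau$ by $\tau_n$ in $A_n$, for $n$ large the directions of $\sigma_n,\tau_n,\gamma_n$ are cyclically ordered on $\Sigma_{p_n}$ with short-arc spacings, giving
\[\measuredangle(\sigma_n,\gamma_n)=\Sigma(p_n)-\measuredangle(\sigma_n,\tau_n)-\measuredangle(\tau_n,\gamma_n).\]
Passing to $\limsup$ on the left and applying (i) to the two remaining angles yields $\limsup\measuredangle(\sigma_n,\gamma_n)\leq 2\pi-\measuredangle(\sigma,\tau)-\measuredangle(\tau,\gamma)=\measuredangle(\sigma,\gamma)$.

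The hard part, recurring in both (ii) and (iii), is the preservation of cyclic order of the approximating segments on $\Sigma_{p_n}$: part (i) only controls individual angles from below, and the circles $\Sigma_{p_n}$ and $\Sigma_p$ can have substantially different lengths, so a priori approximating segments could visit $\Sigma_{p_n}$ in a scrambled order. The remedy in (ii) is to take a very fine partition (mesh well below $\Sigma(p)/2$), whose approximations must retain the cyclic order for $n$ large; in (iii) the same works with only three segments since $\Sigma(p_n)\to 2\pi$. A secondary technicality is the generic uniqueness of segments from $p$ to $q^i$ in an Alexandrov surface, exploited to identify the limits of $\sigma^i_n$ with the originally chosen $\sigma^i$.
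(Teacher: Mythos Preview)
Your arguments for (i) and (iii) are essentially the paper's: comparison angles plus the hinge inequality for (i), and for (iii) the introduction of a third segment so that the three pairwise angles at $p$ add up to the total angle $2\pi$. One small point in (iii): you assert the \emph{equality} $\measuredangle(\sigma_n,\gamma_n)=\Sigma(p_n)-\measuredangle(\sigma_n,\tau_n)-\measuredangle(\tau_n,\gamma_n)$, which would indeed require knowing the cyclic order on $\Sigma_{p_n}$; but only the inequality $\le$ is needed, and that one is automatic (three pairwise distances on a circle of length $L$ always sum to at most $L$). So (iii) goes through without any order considerations.

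The genuine gap is in (ii). You take $k$ segments with fine mesh and then need
\[
\sum_{i}\measuredangle(\sigma^i_n,\sigma^{i+1}_n)\;\le\;\Sigma(p_n),
\]
which you justify by saying that a fine enough partition forces the approximating directions to inherit the cyclic order on $\Sigma_{p_n}$. This is not established: part (i) gives only \emph{lower} semicontinuity of angles, so individual angles $\measuredangle(\sigma^i_n,\sigma^{i+1}_n)$ could be much larger than the limits, and for $k\ge 4$ the displayed inequality can fail if the directions are visited in the wrong cyclic order (e.g.\ four points at $0,\pi,\,0.1,\,\pi+0.1$ on a circle of length $2\pi$ give a ``consecutive'' sum close to $4\pi$). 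Your remedy ``mesh well below $\Sigma(p)/2$'' does not by itself prevent this, and you give no argument that it does.

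The paper sidesteps this completely by taking exactly \emph{three} segments $\sigma,\gamma,\zeta$ from $p$ whose pairwise angles sum to $2\pi-\omega(p)$. For three points on any circle of length $L$ one always has $d(a,b)+d(b,c)+d(c,a)\le L$, with no hypothesis on order; hence
\[
\omega(p_n)\;\le\;2\pi-\bigl(\measuredangle(\sigma_n,\gamma_n)+\measuredangle(\gamma_n,\zeta_n)+\measuredangle(\zeta_n,\sigma_n)\bigr)
\]
holds unconditionally, and (i) finishes the job. So your $k$-segment strategy is not wrong in spirit, but it creates a difficulty that evaporates once you notice that three segments already realise the full total angle and that for three directions the needed inequality is order-free.
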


\begin{proof}
Following \cite{BGP}, we denote by $\tilde{\measuredangle}abc$ the angle at
point $\tilde{b}$ of a geodesic triangle $\tilde{a}\tilde{b}\tilde{c}%
\subset\M_{\kappa}$ such that $d(\tilde{a},\tilde{b})=d\left(  a,b\right)  $,
$d(\tilde{b},\tilde{c})=d\left(  b,c\right)  $ and $d\left(  \tilde{a}%
,\tilde{c}\right)  =d\left(  a,c\right)  $. We recall that the angle
$\measuredangle\left(  \sigma,\gamma\right)  $ between two segments $\sigma$,
$\gamma$ emanating for $p$ is by definition the limit of $\tilde
{\measuredangle}\sigma\left(  t\right)  p\gamma\left(  s\right)  $ when $s$
and $t$ both tend to $0$.

Assume that the result fails, that is, there exists $\varepsilon>0$ such that
for arbitrarily large $n$, we have%
\[
\measuredangle\left(  \sigma,\gamma\right)  >\measuredangle\left(  \sigma
_{n},\gamma_{n}\right)  +\varepsilon\text{.}%
\]
Let $\tau>0$ be small enough to ensure that%
\[
\measuredangle\left(  \sigma,\gamma\right)  \leq\tilde{\measuredangle}%
\sigma\left(  \tau\right)  p\gamma\left(  \tau\right)  +\frac{\varepsilon}%
{3}\text{.}%
\]
Let $q_{n}$, $r_{n}\in A_{n}$ be such that $\max\left(  d\left(  q_{n}%
,\sigma\left(  \tau\right)  \right)  ,d\left(  r_{n},\gamma\left(
\tau\right)  \right)  \right)  \leq d_{H}^{Z}\left(  A,A_{n}\right)  $. For
$n$ large enough, we have%
\[
\left\vert \tilde{\measuredangle}\sigma\left(  \tau\right)  p\gamma\left(
\tau\right)  -\tilde{\measuredangle}q_{n}pq_{n}\right\vert <\frac{\varepsilon
}{3}\text{.}%
\]
Now,%
\begin{align*}
\measuredangle\left(  \sigma_{n},\gamma_{n}\right)   &  \geq\tilde
{\measuredangle}q_{n}pr_{n}\geq\tilde{\measuredangle}\sigma\left(
\tau\right)  p\gamma\left(  \tau\right)  -\frac{\varepsilon}{3}\\
&  \geq\measuredangle\left(  \sigma,\gamma\right)  -\frac{2\varepsilon}{3}%
\geq\measuredangle\left(  \sigma_{n},\gamma_{n}\right)  +\frac{\varepsilon}%
{3}\text{,}%
\end{align*}
and a contradiction is obtained, thus proving the first statement.

Consider a segment $\zeta$ emanating from $p$, and such that $2\pi
-\omega\left(  p\right)  =\measuredangle\left(  \sigma,\gamma\right)
+\measuredangle\left(  \gamma,\zeta\right)  +\measuredangle\left(
\zeta,\sigma\right)  $. Choose $s$, $g$, $z$, some interior points of $\sigma
$, $\gamma$, and $\zeta$ respectively. Take $s_{n}$, $g_{n}$, $z_{n}\in A_{n}$
converging to $s$, $g$, $z$ respectively. Let $\sigma_{n}$ (resp. $\gamma_{n}%
$, $\zeta_{n}$) be a segment between $p_{n}$ and $s_{n}$ (resp. $g_{n}$,
$z_{n}$). Then $\sigma_{n}$ (resp. $\gamma_{n}$, $\zeta_{n}$) converges to the
part of $\sigma$ (resp. $\gamma$, $\zeta$) between $p$ and $s$ (resp. $g$,
$z$), for there is only one segment between these points. By \textit{(i)},
\begin{align*}
\limsup\omega\left(  p_{n}\right)   &  =2\pi-\liminf\left(  \measuredangle
\left(  \sigma_{n},\gamma_{n}\right)  +\measuredangle\left(  \gamma_{n}%
,\zeta_{n}\right)  +\measuredangle\left(  \zeta_{n},\gamma_{n}\right)  \right)
\\
&  \leq2\pi-\left(  \measuredangle\left(  \sigma,\gamma\right)
+\measuredangle\left(  \gamma,\zeta\right)  +\measuredangle\left(
\zeta,\sigma\right)  \right)  =\omega\left(  p\right)  \text{.}%
\end{align*}
This proves \textit{(ii)}.

Now, if $\omega\left(  p\right)  =0$, the above inequality must be an
equality, implying \textit{(iii)}.
\end{proof}

\begin{proof}
[Proof of Theorem \ref{sing}]Let $\mathcal{M}\left(  a\right)  $, $a>0$, be
the set of Alexandrov surfaces with curvature bounded below by $\kappa$ which
admits a point of singular curvature greater than or equal to $a$.

We claim that $\mathcal{M}\left(  a\right)  $ is closed. Take a sequence
$\left\{  A_{n}\right\}  $ of surfaces of $\mathcal{M}\left(  a\right)  $
converging to $A$ in $\mathcal{A}\left(  \kappa\right)  $. By Lemma \ref{LSE},
we can assume that each $A_{n}$ and $A$ are all isometrically embedded in some
compact metric space $Z$, and that $A_{n}$ converges to $A$ with respect to
the Hausdorff distance of $Z$. Let $p_{n} \in A_{n}$ be a point of curvature
at least $a$. Let $p\in A$ be a limit point of the sequence $\left\{
p_{n}\right\}  $; by Lemma \ref{LCrv}, $\omega\left(  p\right)  \geq a$ and
$A\in\mathcal{M}\left(  a\right)  $.

Moreover, due to the density of $\mathcal{R}\left(  \kappa\right)  $ in
$\mathcal{A}\left(  \kappa\right)  $, $\mathcal{M}\left(  a\right)  $ has
empty interior. Hence
\[
\bigcup_{n\in\mathbb{N}}\mathcal{M}\left(  \frac{1}{n+1}\right)  =\left\{
A\in\mathcal{A}\left(  \kappa\right)  |\exists p\in A,~\omega\left(  p\right)
>0\right\}
\]
is meager, and the conclusion follows.
\end{proof}


\section{Lower and upper curvatures}

\label{SGC}For any Alexandrov space $A$ with curvature bounded below, and any
geodesic triangle $\Delta$ in $A$, denote by $\sigma_{0}(\Delta)$ the area of
the Euclidean triangle with sides of the same length as $\Delta$, and by
$e_{0}(\Delta)$ the excess of $\Delta$ (the sum of its angles minus $\pi$).
Let us denote by $E\left(  x,\delta,a\right)  $ the set of all triangles of
diameter less than $\delta$, such that $x$ is interior to $\Delta$, and such
that each angle of $\Delta$ is at least $a$.

The \textit{lower} and the \textit{upper curvature at }$x$, $\underline{K}(x)$
and $\overline{K}(x)$, were defined by A. D. Alexandrov (see the survey
\cite{abn}) by
\[
\underline{K}(x)=\lim_{\delta\rightarrow0}\inf_{\Delta\in E\left(
x,\delta,0\right)  }\frac{e_{0}(\Delta)}{\sigma_{0}(\Delta)}\text{,}%
\hspace{2.0454pc}\overline{K}\left(  x\right)  =\lim_{\delta\rightarrow0}%
\sup_{\Delta\in E\left(  x,\delta,0\right)  }\frac{e_{0}(\Delta)}{\sigma
_{0}(\Delta)}\text{.}%
\]
Y. Machigashira defined the lower and the upper curvature at $x$,
$\underline{G}(x)$ and $\overline{G}\left(  x\right)  $, with slightly more
sophisticated formulas \cite{m}:
\[
\underline{G}(x)=\lim_{a\rightarrow0}\underline{G}_{a}(x)\text{,}%
\hspace{2.0302pc}\ \overline{G}\left(  x\right)  =\lim_{a\rightarrow
0}\overline{G}_{a}\left(  x\right)  \text{,}%
\]
where
\[
\underline{G}_{a}(x)=\lim_{\delta\rightarrow0}\inf_{\Delta\in E\left(
x,\delta,a\right)  }\frac{e_{0}(\Delta)}{\mu(\Delta)}\text{,}\hspace
{2.0302pc}\overline{G}_{a}\left(  x\right)  =\lim_{\delta\rightarrow0}%
\sup_{\Delta\in E\left(  x,\delta,a\right)  }\frac{e_{0}(\Delta)}{\mu(\Delta
)}\text{.}%
\]
This new definition allowed him to prove that, for any Alexandrov surface,
$\underline{G}(x)=\overline{G}\left(  x\right)  $ almost everywhere; therefore
a variant of the Gauss-Bonnet theorem holds.

Let $\mathcal{A}^{+}\left(  0\right)  $ be the space of Alexandrov surfaces
with curvature bounded below by $0$ which have a positive Euler
characteristic. The aim of this section is to prove the following result.

\begin{thm}
\label{g} For most surfaces $A\in\mathcal{A}\left(  \kappa\right)  $, at most
points $x\in A$, $\underline{K}(x)=\underline{G}(x)=\kappa$.

If $\kappa\neq0$, then for most surfaces $A\in\mathcal{A}\left(
\kappa\right)  $, at most points $x\in A$, $\overline{K}(x)=\overline
{G}(x)=\infty$. Otherwise, for most surfaces $A\in\mathcal{A}^{+}\left(
0\right)  $, at most points $x\in A$, $\overline{K}(x)=\overline{G}(x)=\infty$.
\end{thm}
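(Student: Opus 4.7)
The plan is to express both conclusions as complements of countable unions of closed nowhere dense sets, taking Theorem \ref{sing} as the point of departure. Since the surfaces without conical points form a residual class, I would henceforth restrict to such surfaces and first identify $\underline{K}=\underline{G}$ and $\overline{K}=\overline{G}$ at all their points: with no atoms of the curvature measure, $\mu(\Delta)/\sigma_{0}(\Delta)\to 1$ as $\mathrm{diam}(\Delta)\to 0$, and any thin small triangle can be subdivided into ones with minimum angle $\geq a$ whose excess ratios are comparable, reconciling the two definitions. The inequality $\underline{K}(x)\geq\kappa$ is automatic from Toponogov comparison, since any triangle of $A$ has excess at least the excess of its comparison triangle in $\M_{\kappa}$, which equals $\kappa\sigma_{\kappa}=\kappa\sigma_{0}+o(\sigma_{0})$. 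So only $\underline{K}(x)\leq\kappa$ and $\overline{K}(x)=\infty$ at most points remain to be proved.

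For the lower-curvature direction, fix integers $n,m\geq 1$ and let $\mathcal{L}(n,m)$ consist of the $A\in\mathcal{A}(\kappa)$ that admit an open ball $B(x_{0},1/m)\subset A$ on which $\underline{K}(x)\geq\kappa+1/n$ at every point. Empty interior of $\mathcal{L}(n,m)$ follows from Lemma \ref{LDP}: any neighbourhood of $A$ in $\mathcal{A}(\kappa)$ contains a $\kappa$-polyhedron $P$, and at each non-vertex point $p$ of $P$ a neighbourhood is isometric to a disk of $\M_{\kappa}$, so for small triangles $\Delta$ containing $p$ one has $e_{0}(\Delta)=\kappa\mu(\Delta)$ with $\mu(\Delta)=\sigma_{0}(\Delta)(1+o(1))$, giving $\underline{K}(p)=\kappa<\kappa+1/n$; since the vertex set of $P$ is finite, $P\notin\mathcal{L}(n,m)$. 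Closedness is the main obstacle: given $A_{j}\to A$ with $A_{j}\in\mathcal{L}(n,m)$, embed everything into a common compact $Z$ via Lemma \ref{LSE}, extract a convergent subsequence of centres $c_{j}\to c$, and for any $x\in B(c,1/m)\subset A$ and any candidate small triangle $\Delta\subset A$ build approximating triangles $\Delta_{j}\subset A_{j}$ with vertices converging to those of $\Delta$. Lemma \ref{LCrv}(i) yields $\liminf e_{0}(\Delta_{j})\geq e_{0}(\Delta)$ while $\sigma_{0}(\Delta_{j})\to\sigma_{0}(\Delta)$, and the delicate step is interchanging $j\to\infty$ with the defining limit $\mathrm{diam}(\Delta)\to 0$ of $\underline{K}$.

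For the upper-curvature direction, dually let $\mathcal{U}(n,m)$ be the surfaces admitting a ball $B(x_{0},1/m)$ on which $\overline{K}(x)\leq n$. Empty interior requires a constructive step: approximate any surface, with arbitrarily small Gromov--Hausdorff error, by a $\kappa$-polyhedron whose conical vertex set is $\frac{1}{2m}$-dense. Starting from an approximation supplied by Lemma \ref{LDP}, I would subdivide each face and introduce at additional vertices a small positive amount of singular curvature, for instance by excising a tiny geodesic disk and replacing it by a short cone of angle deficit $\omega>0$; such a perturbation stays in $\mathcal{A}(\kappa)$ and distributes conical vertices densely. At each such vertex $v$, small triangles with $v$ in their interior satisfy $e_{0}(\Delta)\geq\omega(v)>0$ while $\sigma_{0}(\Delta)\to 0$, so $\overline{K}(v)=\infty>n$ and the new polyhedron lies outside $\mathcal{U}(n,m)$. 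Closedness of $\mathcal{U}(n,m)$ is analogous and presents the same interchange-of-limits subtlety.

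Finally, the restriction to $\mathcal{A}^{+}(0)$ when $\kappa=0$ is forced by Gauss--Bonnet: if $\chi(A)\leq 0$ and the curvature is bounded below by $0$, then the total curvature is at most $0$ and also at least $0$, hence vanishes, making $A$ flat with $\overline{K}\equiv 0$; for $\kappa\neq 0$ no such obstruction exists. The countable unions $\bigcup_{n,m}\mathcal{L}(n,m)$ and $\bigcup_{n,m}\mathcal{U}(n,m)$ being meager, their complements intersected with the residual class from Theorem \ref{sing} yield the required residual family, and on each such $A$ the sets $\{x:\underline{K}(x)=\kappa\}$ and $\{x:\overline{K}(x)=\infty\}$ are dense $G_{\delta}$, completing the argument.
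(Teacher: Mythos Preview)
Your overall architecture---restrict to the residual class $\mathcal{A}^{0}(\kappa)$ of surfaces without conical points, then write the exceptional sets of surfaces as countable unions of closed nowhere-dense pieces indexed by a ball radius and a curvature threshold---matches the paper exactly. The gap is precisely the step you yourself flag as ``delicate'': the closedness of $\mathcal{L}(n,m)$ and $\mathcal{U}(n,m)$. Because $\underline{K}(x)$ and $\overline{K}(x)$ are defined by a limit $\delta\to 0$, knowing $\underline{K}\geq\kappa+1/n$ on a ball in each $A_{j}$ gives no control at any \emph{fixed} scale, so you cannot pass the bound to a triangle $\Delta$ in the limit surface by approximating it with triangles $\Delta_{j}\subset A_{j}$. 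There is a second, related obstruction you do not mention: segments between two given points are not unique, so even if you pick vertices $u_{j},v_{j},w_{j}\to u,v,w$, the triangles $\Delta_{j}$ need not converge to your chosen $\Delta$; the limit may be a different triangle with the same vertices and a different excess.

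The paper resolves both issues simultaneously by replacing $\underline{K}$, $\overline{K}$ with auxiliary quantities $\underline{H}_{a,\delta}$, $\overline{H}_{a,\delta}$ defined at \emph{fixed} scale $\delta$ and fixed minimum angle $a$, and built so as to absorb the non-uniqueness of geodesics: one takes the infimum (resp.\ supremum) of $e_{0}(\overline{\Delta})/\sigma_{0}(\Delta)$ (resp.\ $e_{0}(\underline{\Delta})/\sigma_{0}(\Delta)$), where $\overline{\Delta}$ and $\underline{\Delta}$ are the fattest and thinnest triangles on the given vertex triple. These functionals are then genuinely semi-continuous along Gromov--Hausdorff convergent sequences of surfaces in $\mathcal{A}^{0}(\kappa)$ (Lemma~\ref{LSCH}), because a limit of fattest triangles is dominated by the fattest triangle in the limit, and angles converge by Lemma~\ref{LCrv}(iii). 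Since $\underline{K}\leq\underline{G}_{a}\leq\underline{H}_{a}$ and $\overline{H}_{a}\leq\overline{G}_{a}\leq\overline{K}$, controlling $\underline{H}_{a}$ and $\overline{H}_{a}$ suffices for the theorem, and the countable union now runs additionally over the scale parameter $\delta=1/p$. Two smaller points: you do not need to prove $\underline{K}=\underline{G}$ and $\overline{K}=\overline{G}$ in general (your ``thin triangles can be subdivided'' claim is not obvious and is not used in the paper; the one-sided inequalities suffice), and your use of $\kappa$-polyhedra to witness empty interior is inconsistent with having restricted to $\mathcal{A}^{0}(\kappa)$---the paper passes instead to the Riemannian approximations of Lemma~\ref{LDR}, which have curvature $\kappa$ outside finitely many small balls and lie in $\mathcal{A}^{0}(\kappa)$.
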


\begin{remark}
The restriction of the second statement is necessary if $\kappa=0$. Indeed,
one can easily prove that a topological torus (or a Klein bottle) in
$\mathcal{A}\left(  0\right)  $ is necessarily flat. Since by Perel'man's
stability theorem (see for instance \cite[10.10.5]{bbi}) the tori form an open
set in $\mathcal{A}\left(  0\right)  $, it is not possible to expect any kind
of roughness for most surfaces in $\mathcal{A}\left(  0\right)  $. This fact
was already pointed out by K. Adiprasito and T. Zamfirescu \cite{AZ_2}; see
also \cite{JC0}.
\end{remark}

We need the following lemma for the proof of Theorem \ref{g}.

\begin{lm}
\label{42} Let $A\in\mathcal{A}\left(  \kappa\right)  $, $x\in A$ be a
non-conical point, and $a$ be a (small) fixed positive number. Let $\left\{
\Delta_{n}\right\}  $ be a sequence of geodesic triangles of $E\left(
x,1,a\right)  $ converging to $x$. Then $\frac{\sigma_{0}\left(  \Delta
_{n}\right)  }{\mu\left(  \Delta_{n}\right)  }$ converges to $1$.
\end{lm}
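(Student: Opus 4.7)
The plan is to rescale at $x$ and exploit the fact that, since $x$ is non-conical, the tangent cone at $x$ is the Euclidean plane. Concretely, let $\delta_n:=\operatorname{diam}(\Delta_n)$, which tends to $0$ by assumption. Consider the pointed rescaled spaces $A_n:=(A,\delta_n^{-1}d,x)$. Since $\omega(x)=0$ the space of directions at $x$ is a full circle of length $2\pi$, so its Euclidean cone (the tangent cone $T_xA$) is $\mathbb{R}^2$; hence $(A_n,x)$ converges in pointed Gromov--Hausdorff topology to $(\mathbb{R}^2,0)$. By a pointed variant of Lemma~\ref{LSE}, I would embed a ball of each $A_n$ (containing the rescaled triangle) and a ball of $\mathbb{R}^2$ into a common compact metric space $Z$ so that the Hausdorff distance to $\mathbb{R}^2$ tends to zero.

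Next I would extract a non-degenerate Euclidean limit triangle. The rescaled triangles $\tilde{\Delta}_n$ have diameter $1$, each angle $\geq a$, and contain the basepoint in their interior. Passing to a subsequence, the three vertices $p_n,q_n,r_n$ of $\tilde{\Delta}_n$ converge in $Z$ to $p,q,r\in\mathbb{R}^2$; the geodesic sides, being segments between convergent endpoints in a uniquely geodesic limit, converge to the Euclidean segments joining the corresponding vertices. By the Gauss--Bonnet formula for $2$-manifolds of bounded integral curvature, $e_0(\Delta_n)=\omega(\Delta_n)$, where $\omega$ is the total curvature measure; as $\omega$ is locally finite and $\omega(\{x\})=0$, we get $e_0(\Delta_n)\to 0$. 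Applying Lemma~\ref{LCrv}(iii) at each of $p,q,r$ (where $\omega=0$), the angles of $\tilde{\Delta}_n$ converge to those of the Euclidean triangle $\tilde{\Delta}:=[p,q,r]$; each limit angle is $\geq a$, so $\tilde{\Delta}$ is non-degenerate.

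To conclude, the side lengths of $\tilde{\Delta}_n$ converge to those of $\tilde{\Delta}$, so by Heron's formula $\sigma_0(\tilde{\Delta}_n)\to\sigma_0(\tilde{\Delta})$, the Euclidean area of $\tilde{\Delta}$. By continuity of the $2$-dimensional Hausdorff measure under non-collapsing Gromov--Hausdorff convergence of Alexandrov surfaces, $\mu(\tilde{\Delta}_n)\to\mu(\tilde{\Delta})$, which for a Euclidean triangle is again its Euclidean area. Since both $\sigma_0$ and $\mu$ scale by $\delta_n^{-2}$, this yields
\[
\frac{\sigma_0(\Delta_n)}{\mu(\Delta_n)}=\frac{\sigma_0(\tilde{\Delta}_n)}{\mu(\tilde{\Delta}_n)}\longrightarrow 1.
\]

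The main obstacle is the area convergence $\mu(\tilde{\Delta}_n)\to\mu(\tilde{\Delta})$. If a general volume-continuity statement for Hausdorff-convergent subsets under non-collapsing GH limits of Alexandrov surfaces is not readily at hand, one can instead sandwich $\tilde{\Delta}_n$ in $Z$ between an $\varepsilon$-thickening and an $\varepsilon$-shrinking of $\tilde{\Delta}$, and use the Bishop--Gromov inequality (which in the rescaled metric gives curvature bound $\delta_n^2\kappa\to 0$) to control the area of the thin collar around $\partial\tilde{\Delta}$; letting $\varepsilon\to 0$ then gives the required measure convergence.
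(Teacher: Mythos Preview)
Your approach is correct and is essentially the same idea as the paper's: the paper's two-line proof simply asserts that $\Delta_n$ is $\varepsilon_n$-isometric to a Euclidean triangle (with $\varepsilon_n\to 0$) and defers to Machigashira's Lemma~2.4, which is precisely the content of your rescaling-to-the-tangent-cone argument. You have in fact unpacked more of the details than the paper does, correctly identifying the area-convergence step as the point requiring the most care; the Gauss--Bonnet digression on $e_0(\Delta_n)\to 0$ is unnecessary for your argument, and Lemma~\ref{LCrv} needs a routine pointed variant, but neither of these is a genuine gap.
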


\begin{proof}
This follows from the fact that $\Delta_{n}$ is $\varepsilon_{n}$-isometric
(see \cite{BGP}) to a Euclidean triangle for a sequence $\left\{
\varepsilon_{n}\right\}  $ tending to $0$. For details, see \cite[Lemma
2.4]{m}.
\end{proof}

As a consequence of Lemma \ref{42}, one can substitute $\sigma_{0}$ to $\mu$
in the definition of $\underline{G}_{a}(x)$ and $\overline{G}_{a}$ (both
definitions give $\infty$ if $x$ is conical). It follows that for all $x\in A$
and any fixed (small) number $a>0$, we have
\[
\underline{K}(x)\leq\underline{G}(x)\leq\underline{G}_{a}(x)\leq\overline
{G}_{a}\left(  x\right)  \leq\overline{G}(x)\leq\overline{K}\left(  x\right)
\text{.}%
\]

A small geodesic triangle $\Delta$ has a well defined interior, and a well
defined exterior. Among all triangles sharing the same vertices, one has the
smallest (with respect to inclusion) interior; we denote it by $\underline
{\Delta}$. Similarly, the triangle with the same vertices as $\Delta$ and the
largest interior will be denoted by $\overline{\Delta}$. Let $F\left(
x,\delta,a\right)  $ be the set of those triangles $\Delta$ such that
$x\in\mathrm{int}\underline{\Delta}$, $\mathrm{\mathrm{diam}}\Delta<\delta$,
and each angle of $\underline{\Delta}$ is greater than $a$. We set
\begin{align*}
\underline{H}_{a}\left(  x\right)   &  =\lim_{\delta\rightarrow0}\underline
{H}_{a,\delta}\left(  x\right)  ,\hspace{2.0608pc}\underline{H}_{a,\delta
}\left(  x\right)  =\inf_{\Delta\in F\left(  x,\delta,a\right)  }\frac
{e_{0}(\overline{\Delta})}{\mathcal{\sigma}_{0}(\Delta)}\text{,}\\
\overline{H}_{a}\left(  x\right)   &  =\lim_{\delta\rightarrow0}\overline
{H}_{a,\delta}\left(  x\right)  ,\hspace{2.0685pc}\overline{H}_{a,\delta
}\left(  x\right)  =\sup_{\Delta\in F\left(  x,\delta,a\right)  }\frac
{e_{0}(\underline{\Delta})}{\mathcal{\sigma}_{0}(\Delta)}\text{.}%
\end{align*}
Since $e_{0}\left(  \overline{\Delta}\right)  \geq e_{0}\left(  \Delta\right)
\geq e_{0}\left(  \underline{\Delta}\right)  $ and $F\left(  x,\delta
,a\right)  \subset E\left(  x,\delta,a\right)  $, it is clear that
$\underline{G}_{a}(x)\leq\underline{H}_{a}\left(  x\right)  $ and
$\overline{H}_{a}\left(  x\right)  \leq\overline{G}_{a}\left(  x\right)  $ for
all points $x$ in $A$. Moreover, on surfaces without conical points,
$\underline{H}_{a,\delta}$ and $\overline{H}_{a,\delta}$ are respectively
upper and lower semi-continuous in the following strong sense.

\begin{lm}
\label{LSCH} Let $Z$ be a compact metric space. Let $\left\{  A_{n}\right\}  $
be a sequence of Alexandrov surfaces with curvature bounded below by $\kappa$,
embedded in $Z$ and converging to $A\in\mathcal{A}\left(  \kappa\right)$
with respect to the Hausdorff distance in $Z$. Assume that $A$ has no conical
points and let $x_{n}\in A_{n}$ converges to $x\in A$. Then
\[
\limsup_{n\rightarrow\infty}\underline{H}_{a,\delta}\left(  x_{n}\right)
\leq\underline{H}_{a,\delta}\left(  x\right)  , \hspace{0.3cm} \liminf
_{n\rightarrow\infty}\overline{H}_{a,\delta}\left(  x_{n}\right)
\geq\overline{H}_{a,\delta}\left(  x\right)  \text{.}%
\]

\end{lm}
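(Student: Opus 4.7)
The plan is to prove the two semi-continuity claims in parallel, both by lifting a nearly-extremal triangle from $A$ to $A_n$ and then passing to the limit through Lemma \ref{LCrv}. I spell out the argument for $\underline{H}_{a,\delta}$; the bound for $\overline{H}_{a,\delta}$ is dual, using $\underline{\Delta_n}$ in place of $\overline{\Delta_n}$ and the minimality (rather than maximality) of $\underline{\Delta}$'s excess.

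Fix $\varepsilon>0$ and pick $\Delta\in F(x,\delta,a)$ with $e_0(\overline{\Delta})/\sigma_0(\Delta)<\underline{H}_{a,\delta}(x)+\varepsilon$. A mild perturbation lets me assume the conditions defining $F(x,\delta,a)$ hold with positive slack $\eta$: $\mathrm{diam}(\Delta)<\delta-\eta$, every angle of $\underline{\Delta}$ exceeds $a+\eta$, and $x$ lies at distance at least $\eta$ from $\partial\underline{\Delta}$. Let $p_1,p_2,p_3$ be the vertices of $\Delta$, use the Hausdorff convergence in $Z$ to pick $p_i^n\in A_n$ converging to $p_i$, and for each $n$ let $\Delta_n$ be any geodesic triangle in $A_n$ with vertices $p_1^n,p_2^n,p_3^n$. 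Since $\sigma_0$ depends only on pairwise vertex distances, $\sigma_0(\Delta_n)\to\sigma_0(\Delta)$.

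To control $e_0(\overline{\Delta_n})$, pass to a subsequence along which the three sides of $\overline{\Delta_n}$ converge (by Arzel\`a--Ascoli) to curves on $A$ of length $d(p_i,p_j)$, i.e.\ to segments, forming a geodesic triangle $\Delta^\star$ in $A$ with vertices $p_1,p_2,p_3$. Because $A$ has no conical points, Lemma \ref{LCrv}(iii) applies at each $p_i$, so the angles of $\overline{\Delta_n}$ converge to those of $\Delta^\star$ and $e_0(\overline{\Delta_n})\to e_0(\Delta^\star)$. Since $\Delta^\star$ shares its vertices with $\Delta$, its maximal-interior companion coincides with $\overline{\Delta}$, and the inequality $e_0(\overline{\Delta})\ge e_0(\Delta^\star)$ recorded just after Lemma \ref{42} gives $\limsup_n e_0(\overline{\Delta_n})\le e_0(\overline{\Delta})$ along the subsequence, hence (the subsequence being arbitrary) along the full sequence.

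What then remains, and is in my view the main obstacle, is to verify $\Delta_n\in F(x_n,\delta,a)$ for all sufficiently large $n$: the diameter bound is immediate from continuity of pairwise distances, but both the angle lower bound on $\underline{\Delta_n}$ and the containment $x_n\in\mathrm{int}\,\underline{\Delta_n}$ require care, since $\underline{\Delta_n}$ is selected by a minimality property in $A_n$ that is not a priori preserved in Gromov--Hausdorff limits. The way through is to analyze any Hausdorff sublimit $\Delta'$ of the $\underline{\Delta_n}$, which by Arzel\`a--Ascoli is a geodesic triangle on $A$ with vertices $p_1,p_2,p_3$, and to compare its angles and interior with those of $\underline{\Delta}$ using Lemma \ref{LCrv}(iii) together with the absence of conical points on $A$. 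Combined with the $\eta$-slack installed at the outset, this supplies the two remaining conditions for $\Delta_n$, yielding $\underline{H}_{a,\delta}(x_n)\le e_0(\overline{\Delta_n})/\sigma_0(\Delta_n)$; letting $\varepsilon\to 0$ completes the proof.
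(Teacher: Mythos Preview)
Your argument is correct and follows essentially the same route as the paper: lift a near-optimal triangle from $A$ to $A_n$, use Lemma~\ref{LCrv}(iii) (valid because $A$ has no conical points) to get convergence of angles, and compare sublimits of $\underline{\Delta_n}$ and $\overline{\Delta_n}$ with the extremal triangles $\underline{\Delta}$, $\overline{\Delta}$ on $A$. The only cosmetic differences are that the paper sidesteps your $\eta$-slack by simply choosing $\Delta=\underline{\Delta}$ from the start (so any limit $\Delta'$ of $\underline{\Delta_n}$ automatically has angles at least those of $\Delta$ and interior containing $\mathrm{int}\,\Delta$), and it takes $\Delta_n=\overline{\Delta_n}$ rather than an arbitrary triangle, which slightly streamlines the excess estimate.
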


\begin{proof}
By Lemma \ref{LCrv}, in the case of surfaces without conical points, the angle
between two segments of $A_{n}$ tends to the angle between the limit segments.

Fix $\varepsilon>0$; there exists a triangle $\Delta$ of $F\left(
x,\delta,a\right)  $ such that
\begin{equation}
\frac{e_{0}(\overline{\Delta})}{\mathcal{\sigma}_{0}(\Delta)}\leq\underline
{H}_{a,\delta}\left(  x\right)  +\frac{\varepsilon}{2}\text{.} \label{3}%
\end{equation}
Let $u$, $v$, $w$ be the vertices of $\Delta$. Obviously, one can choose
$\Delta$ such that $\Delta=\underline{\Delta}$. Let $u_{n}$, $v_{n}$, $w_{n}$
be points of $A_{n}$ converging to $u$, $v$, and $w$ respectively. Let
$\Delta_{n}=\overline{\Delta_{n}}$ be the fattest geodesic triangle with
vertices $u_{n}$, $v_{n}$, $w_{n}$. Let $\Delta^{\prime}$ (resp.
$\Delta^{\prime\prime}$) be a limit triangle of the sequence $\left\{
\underline{\Delta_{n}}\right\}  $ (resp. $\Delta_{n}$).

Since the diameter is continuous with respect to the Hausdorff distance,
$\mathrm{\mathrm{diam}}\Delta_{n}<\delta$ for $n$ large enough. Due to the
choice of $\Delta$, the angles of $\Delta^{\prime}$ are greater than the
corresponding angles of $\Delta$, hence, by continuity of angles, for $n$
large enough, the angles of $\underline{\Delta_{n}}$ are also greater than $a
$. The point $x$ belongs to $\mathrm{int}\Delta\subset\mathrm{int}%
\Delta^{\prime}$, hence, for $n$ large enough, $x_{n}\in\underline{\Delta_{n}%
}$. It follows that $\Delta_{n}\in F\left(  x_{n},\delta,a\right)  $, and
therefore
\begin{equation}
\underline{H}_{a,\delta}\left(  x_{n}\right)  \leq\frac{e_{0}(\Delta_{n}%
)}{\mathcal{\sigma}_{0}(\Delta_{n})}\text{.} \label{4}%
\end{equation}
For $n$ large enough, the continuity of angles implies
\begin{equation}
\frac{e_{0}(\Delta_{n})}{\mathcal{\sigma}_{0}(\Delta_{n})}\leq\frac
{e_{0}(\Delta^{^{\prime\prime}})}{\mathcal{\sigma}_{0}(\Delta)}+\frac
{\varepsilon}{2}\leq\frac{e_{0}(\overline{\Delta})}{\mathcal{\sigma}%
_{0}(\Delta)}+\frac{\varepsilon}{2}\text{.} \label{5}%
\end{equation}
Gathering (\ref{4}), (\ref{5}) and (\ref{3}), we get for $n$ large enough%
\[
\underline{H}_{a,\delta}\left(  x_{n}\right)  \leq\underline{H}_{a,\delta
}\left(  x\right)  +\varepsilon\text{.}%
\]
This holds for any $\varepsilon>0$, whence the conclusion concerning
$\underline{H}_{a,\delta}$.

The case of $\overline{H}_{a,\delta}$ is similar.
\end{proof}

\begin{lm}
\label{LDCP} Put $\mathcal{B}=\mathcal{A}\left(  \kappa\right)  $ if
$\kappa\neq0$, and $\mathcal{B}=\mathcal{A}^{+}\left(  0\right)  $ otherwise.
For any $A\in\mathcal{B}$ and any $\varepsilon>0$, there exists a $\kappa
$-polyhedron $P\in\mathcal{B}$ such that $d_{GH}\left(  A,P\right)
\leq\varepsilon$ and the conical points of $P$ form an $\varepsilon$-net in
$P$.
\end{lm}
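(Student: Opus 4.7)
The plan is to first use Lemma \ref{LDP} to approximate $A$ by a $\kappa$-polyhedron, and then introduce additional conical points by a local surgery on the polyhedral metric, so as to arrange them $\varepsilon$-densely. Apply Lemma \ref{LDP} to get a $\kappa$-polyhedron $P_{0}\in\mathcal{A}(\kappa)$ with $d_{GH}(A,P_{0})<\varepsilon/3$. In the case $\kappa=0$ with $A\in\mathcal{A}^{+}(0)$, Perel'man's stability theorem (invoked in the remark following Theorem \ref{g}) implies that, for $P_{0}$ sufficiently close to $A$, one has $P_{0}\in\mathcal{A}^{+}(0)$ as well, since $P_{0}$ is then homeomorphic to $A$.

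Pick a finite $\varepsilon/3$-net $\{x_{1},\dots,x_{N}\}$ in $P_{0}$; after a small perturbation we may assume each $x_{i}$ lies in the interior of some face of $P_{0}$. For each $x_{i}$ which is not already a conical point of $P_{0}$, we perform the following local surgery. Choose a small geodesic triangle $T_{i}\subset\M_{\kappa}$ embedded inside the face containing $x_{i}$, with $x_{i}$ in the interior of $T_{i}$, with the $T_{i}$'s pairwise disjoint, and with $\mathrm{diam}\,T_{i}$ small. Let $a,b,c$ be the vertices of $T_{i}$. Replace $T_{i}$ by three geodesic triangles of $\M_{\kappa}$ with vertices $a p_{i} b$, $b p_{i} c$, $c p_{i} a$ glued pairwise along their common sides, where the lengths $|p_{i}a|,|p_{i}b|,|p_{i}c|$ are chosen slightly larger than the actual $\M_{\kappa}$-distances from a point near $x_{i}$ to $a,b,c$ inside $T_{i}$. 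This makes the three angles at $p_{i}$ sum to $2\pi-\alpha_{i}$ for some small $\alpha_{i}>0$, so $p_{i}$ becomes a conical point. The boundary $\partial T_{i}$ is preserved by this surgery, so gluing to $P_{0}\setminus T_{i}$ (retriangulated if necessary) produces a new $\kappa$-polyhedron $P$.

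Since the new vertices $p_{i}$ have positive singular curvature $\alpha_{i}$, and every other vertex of $P$ has total angle at most $2\pi$ (those of $P_{0}$ are unchanged, while $a,b,c$ only get their total angles decreased by the modification), we have $P\in\mathcal{A}(\kappa)$. The surgery is topologically trivial, so in the $\kappa=0$ case $P\in\mathcal{A}^{+}(0)$. The natural correspondence $R$ between $P_{0}$ and $P$, equal to the identity outside the $T_{i}$'s and defined radially in each surgery region, has distortion that tends to $0$ as $\max_{i}\mathrm{diam}\,T_{i}$ and $\max_{i}\alpha_{i}$ tend to $0$; taking these small enough gives $d_{GH}(P_{0},P)<\varepsilon/3$, hence $d_{GH}(A,P)<\varepsilon$. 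Moreover each $x_{i}$ lies within distance $\mathrm{diam}\,T_{i}$ of the conical point $p_{i}\in P$ (or is itself already conical in $P_{0}$), and transporting through $R$ shows that the conical points of $P$ form an $\varepsilon$-net.

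The main obstacle is producing a local surgery that introduces a prescribed small positive singular curvature at an interior point without altering the polyhedral boundary of the affected region and without significantly changing distances elsewhere. The retriangulation above accomplishes this: enlarging the radial distances $|p_{i}a|,|p_{i}b|,|p_{i}c|$ slightly decreases each angle at $p_{i}$ (as one checks in the Euclidean case via the law of cosines, and which passes to $\M_{\kappa}$ by continuity for small triangles), yielding positive singular curvature there; meanwhile $\partial T_{i}$ remains a polygonal boundary in $\M_{\kappa}$ which glues isometrically back onto $P_{0}\setminus T_{i}$, so distances change only by $O(\mathrm{diam}\,T_{i}+\alpha_{i})$ and the resulting object is genuinely a $\kappa$-polyhedron in $\mathcal{A}(\kappa)$.
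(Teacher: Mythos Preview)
Your surgery cannot work as stated, and the obstruction is Gauss--Bonnet. Take the Euclidean case $\kappa=0$. You replace a flat triangle $T_i=abc$ by three Euclidean triangles $ap_ib$, $bp_ic$, $cp_ia$, keeping the side lengths $|ab|,|bc|,|ca|$ fixed. The nine interior angles of these three triangles sum to $3\pi$; if the three angles at $p_i$ sum to $2\pi-\alpha_i$ with $\alpha_i>0$, then the six angles at $a,b,c$ sum to $\pi+\alpha_i$. But in the original $T_i$ the three angles at $a,b,c$ summed to $\pi$. Hence the total angle contributed to $a,b,c$ by the surgery region has \emph{increased} by exactly $\alpha_i$, not decreased as you claim. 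Since $a,b,c$ were flat points of $P_0$ (total angle $2\pi$), at least one of them now has total angle strictly greater than $2\pi$, and the resulting surface is not in $\mathcal{A}(0)$. The same computation with the $\kappa$-excess term shows that for general $\kappa$ the increase is $\alpha_i+\kappa\cdot(\text{area change})$, which still need not be nonpositive. In short, for a disk bounded by geodesics the total singular curvature inside is determined by the boundary turning; you cannot insert positive curvature at an interior point while keeping $\partial T_i$ unchanged and keeping all boundary angles $\le 2\pi$.

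This is exactly why the paper uses a different mechanism. For $\kappa\ne 0$ the paper first applies a global homothety so that $A$ lands in $\mathcal{A}(\kappa')$ with $\kappa'>\kappa$, and then notes that in any $\kappa$-polyhedral approximation of this rescaled surface every vertex is automatically conical, by strict comparison of $\M_{\kappa'}$-angles versus $\M_\kappa$-angles; taking a fine triangulation makes the vertex set an $\varepsilon$-net. For $\kappa=0$ there is no room to rescale, so the paper invokes the extrinsic realization of the polyhedral approximation as a convex polytope in $\mathbb{R}^3$ and adds vertices by taking the convex hull with nearby exterior points (which visibly remains convex, hence in $\mathcal{A}^{+}(0)$); the projective-plane case goes through the double cover. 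If you want an intrinsic argument along your lines, you would need a genuinely different local move---for instance one that also shortens $\partial T_i$, or one that borrows curvature from an existing conical vertex---rather than a pure interior retriangulation with fixed boundary.
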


\begin{proof}
Assume first that $\kappa>0$. Applying to $A\in\mathcal{A}\left(
\kappa\right)  $ a global homothety (of scaling factor slightly less than $1$)
yields a surface $A^{\prime}\in A\left(  \kappa^{\prime}\right)  $ with
$\kappa^{\prime}>\kappa$, arbitrarily close to $A$. By angle comparison, the
vertices of any $\kappa$-polyhedral approximation $P$ of $A^{\prime}$ (see
Lemma \ref{LDP}) are conical points.

The case $\kappa^{\prime}<0$ is similar (with a scaling factor more than $1 $).

Now consider $A\in\mathcal{A}^{+}\left(  0\right)  $. Suppose first that $A$
is homeomorphic to the sphere. A polyhedral approximation $P$ of $A$ is also
homeomorphic to the sphere; by Alexandrov's existence Theorem, it can be
realized as a convex polyhedron in $\mathbb{R}^{3}$. If the vertices of $P$
are too far from each other, one can add new ones in the following way. Choose
a point $p$ near $P$, outsides of $P$, close to the point you want to
\textquotedblleft make\textquotedblright\ conical, and take the convex hull of
$P\cup\left\{  p\right\}  $. If $A$ is homeomorphic to the projective plane,
one can do the same construction with the universal covering of $P$, by adding
pair of points $p$, $p^{\prime}$ symmetric to each other.
\end{proof}

Now we are in a position to prove Theorem \ref{g}.

\begin{proof}
[Proof of Theorem \ref{g}]Denote by $\mathcal{A}^{0}\left(  \kappa\right)  $
the set of Alexandrov surfaces with curvature bounded below by $\kappa$
without conical points. Observe that $\mathcal{A}^{0}\left(  \kappa\right)  $
is residual in $\mathcal{A}\left(  \kappa\right)  $ by Theorem \ref{sing}, and
so is itself a Baire space. For $A\in\mathcal{A}^{0}\left(  \kappa\right)  $,
we define
\[
C_{a,\delta,\varepsilon}^{A}=\set(:x\in A|\underline{H}_{a,\delta}\left(
x\right)  \geq\kappa+\varepsilon:)
\]
and
\begin{align*}
\mathcal{M}  &  =\set(:A\in\mathcal{A}^{0}\left(  \kappa\right)
|{\set[2](:x\in A|\underline{H}_{a}\left(  x\right)  >\kappa:)}~\text{is not
meager}:)\\
&  =\bigcup_{p\in\mathbb{N}^{\ast}}\set(:A\in\mathcal{A}^{0}\left(
\kappa\right)  |{\set[2](:x\in A|\underline{H}_{a,\frac{1}{p}}\left(
x\right)  >\kappa:)}~\text{is not meager}:)\\
&  =\bigcup_{p\in\mathbb{N}^{\ast}}\bigcup_{q\in\mathbb{N}^{\ast}}%
\set(:A\in\mathcal{A}^{0}\left(  \kappa\right)  |C_{a,\frac{1}{p},\frac{1}{q}%
}^{A}~\text{is not meager}:)\text{.}%
\end{align*}
Applying Lemma \ref{LSCH} (with $A_{n}=A$), we get that the sets
$C_{a,\delta,\varepsilon}^{A}$ are closed, and consequently, they are meager
if and only if they have empty interior. It follows that
\[
\mathcal{M}=\bigcup_{p\in\mathbb{N}^{\ast}}\bigcup_{q\in\mathbb{N}^{\ast}%
}\bigcup_{r\in\mathbb{N}^{\ast}}\mathcal{M}_{p,q,r},
\]
where
\[
\mathcal{M}_{p,q,r}\overset{\mathrm{def}}{=}\set(:A\in\mathcal{A}^{0}\left(
\kappa\right)  |\exists x\in A~\text{s.t.}~\bar{B}^{A}\left(  x,\frac{1}%
{r}\right)  \subset C_{a,\frac{1}{p},\frac{1}{q}}^{A}:)\text{.}%
\]
We claim that $\mathcal{M}_{p,q,r}$ is closed in $A^{0}\left(  \kappa\right)
$. Let $\left\{  A_{n}\right\}  $ be a sequence of surfaces of $\mathcal{M}%
_{p,q,r}$, converging to $A\in\mathcal{A}^{0}\left(  \kappa\right)  $. For
each $n$, there exists $x_{n}\in A_{n}$ such that $B_{n}\overset{\mathrm{def}%
}{=}\bar{B}^{A_{n}}\left(  x,\frac{1}{r}\right)  \subset C_{a,1/p,1/q}^{A_{n}%
}$. By selecting a subsequence, we may assume that $B_{n}$ converges to a ball
$B=\bar{B}^{A}\left(  x,1/r\right)  $. Now, a point $x\in B$ is limit of
points of $B_{n}$ and, by Lemma \ref{LSCH}, $\underline{H}_{a,1/p}\left(
x\right)  \geq\kappa+1/q$, whence $B\subset C_{a,1/p,1/q}^{A}$ and
$A\in\mathcal{M}_{p,q,r}$. This proves the claim.

Arbitrarily close to any surface of $\mathcal{M}_{p,q,r}$ is a $\kappa
$-polyhedron, and arbitrarily close to this polyhedron, is a Riemannian
surface whose Gaussian curvature is $\kappa$ outside a finite number of
arbitrarily small closed balls (see Lemma \ref{LDR}). Consequently
$\mathcal{M}_{p,q,r}$ has no interior points. Hence $\mathcal{M}$ is meager in
$\mathcal{A}^{0}\left(  \kappa\right)  $, and therefore in $\mathcal{A}\left(
\kappa\right)  $, too.

Put $\mathcal{B}^{0}=\mathcal{B}\cap\mathcal{A}^{0}$, where $\mathcal{B}$ is
the set defined in Lemma \ref{LDCP}. For $A\in\mathcal{B}^{0}$, we define%
\[
D_{a,\delta,q}^{A}=\set(:x\in A|\overline{H}_{a,\delta}\left(  x\right)  \leq
q:)
\]
and
\begin{align*}
\mathcal{N}  &  =\set(:A\in\mathcal{B}^{0}|{\set[2](:x\in A|\overline{H}%
_{a}\left(  x\right)  <\infty:)}~\text{is not meager}:)\\
&  =\bigcup_{p\in\mathbb{N}^{\ast}}\set(:A\in\mathcal{B}^{0}|{\set[2](:x\in
A|\overline{H}_{a,\frac{1}{p}}\left(  x\right)  <\infty:)}~\text{is not
meager}:)\\
&  =\bigcup_{p\in\mathbb{N}^{\ast}}\bigcup_{q\in\mathbb{N}^{\ast}}%
\set(:A\in\mathcal{B}^{0}|D_{a,\frac{1}{p},q}^{A}~\text{is not meager}:)\\
&  =\bigcup_{p\in\mathbb{N}^{\ast}}\bigcup_{q\in\mathbb{N}^{\ast}}%
\bigcup_{r\in\mathbb{N}^{\ast}}\mathcal{N}_{p,q,r}\text{,}%
\end{align*}
where%
\[
\mathcal{N}_{p,q,r}\overset{\mathrm{def}}{=}\set(:A\in\mathcal{B}^{0}|\exists
x\in A~\text{s.t.}~\bar{B}^{A}\left(  x,\frac{1}{r}\right)  \subset
D_{a,\frac{1}{p},q}^{A}:)\text{.}%
\]
One proves that the sets $\mathcal{N}_{p,q,r}$ are closed in the same way as
for $\mathcal{M}_{p,q,r}$. By Lemma \ref{LDCP}, for any $\varepsilon>0$,
arbitrarily close to any surface of $\mathcal{M}_{p,q,r}$ is a $\kappa
$-polyhedron $P$, whose vertices form an $\varepsilon$-net. Arbitrarily close
to this polyhedron is a smooth surface with a region of Gaussian curvature
more than $q$ close to each vertex of $P$ (see the proof of Lemma \ref{LDR}).
Hence $\mathcal{N}_{p,q,r}$ has empty interior in $\mathcal{B}^{0}$,
$\mathcal{N}$ is meager in $\mathcal{B}^{0}$, and consequently in
$\mathcal{B}$, too.
\end{proof}

We close the paper with a new result in convex geometry, whose proof is a
simplified version of the proof of Theorem \ref{g}. Notice that the
Gromov-Hausdorff topology differs from the Pompeiu-Hausdorff one (usually
considered in the framework of convexity), so Theorem \ref{Cor} cannot be
directly obtained from Theorem \ref{g}.

\begin{thm}
\label{Cor} On most convex surfaces $S\in\mathcal{S}$, at most points $x\in
S$, $\underline{K}(x)=\underline{G}(x)=0$ and $\overline{K}(x)=\overline
{G}(x)=\infty$.
\end{thm}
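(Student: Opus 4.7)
The plan is to adapt the proof of Theorem \ref{g}, specialized to $\kappa=0$, from $(\mathcal{A}(0), d_{GH})$ to $(\mathcal{S}, d_{PH})$. The first step is to invoke the Klee-Gruber theorem cited in the Introduction: most convex surfaces are of class $C^{1}$ and strictly convex, so the subset $\mathcal{S}^{0} \subset \mathcal{S}$ of convex surfaces without conical points is residual in $\mathcal{S}$, and hence itself a Baire space. The key metric input I will use is the classical fact that Pompeiu-Hausdorff convergence of convex surfaces to a non-degenerate convex surface implies Gromov-Hausdorff convergence of their intrinsic metrics; this allows Lemma \ref{LCrv} and the semi-continuity statement of Lemma \ref{LSCH} to be transposed verbatim to convergent sequences in $\mathcal{S}^{0}$.

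I would then introduce the same families $C_{a,\delta,\varepsilon}^{S} = \{x \in S : \underline{H}_{a,\delta}(x) \geq \varepsilon\}$ and $D_{a,\delta,q}^{S} = \{x \in S : \overline{H}_{a,\delta}(x) \leq q\}$ as in the proof of Theorem \ref{g}, and decompose the bad sets
\[
\mathcal{M} = \{S \in \mathcal{S}^{0} : \{x : \underline{H}_{a}(x) > 0\}\text{ is not meager in }S\},\quad
\mathcal{N} = \{S \in \mathcal{S}^{0} : \{x : \overline{H}_{a}(x) < \infty\}\text{ is not meager in }S\},
\]
as countable unions $\bigcup_{p,q,r} \mathcal{M}_{p,q,r}$ and $\bigcup_{p,q,r} \mathcal{N}_{p,q,r}$ encoding the existence in $S$ of a ball of radius $1/r$ contained in $C_{a,1/p,1/q}^{S}$, respectively $D_{a,1/p,q}^{S}$. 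Closedness of each $\mathcal{M}_{p,q,r}$ and $\mathcal{N}_{p,q,r}$ in $\mathcal{S}^{0}$ then follows from the adapted semi-continuity lemma exactly as in Section~4.

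It then remains to show that each of these closed sets has empty interior in $\mathcal{S}^{0}$. For $\mathcal{M}_{p,q,r}$, I would first approximate the given convex surface by a convex polytope (classical Pompeiu-Hausdorff density) and then smooth each vertex as in Lemma \ref{LDR} to obtain a smooth convex surface whose Gaussian curvature vanishes identically outside arbitrarily small closed balls around the finitely many vertices; this smoothed metric is realized as an actual convex surface in $\mathbb{R}^{3}$ via Alexandrov's existence theorem, since it is a non-negatively curved metric on $S^{2}$. Any ball of radius $1/r$ on the resulting surface meets the flat region, where $\underline{H} = 0$, contradicting membership in $\mathcal{M}_{p,q,r}$. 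For $\mathcal{N}_{p,q,r}$, I would use the convex-hull construction of Lemma \ref{LDCP} — iteratively adding exterior points near any chosen location of a convex polytope — to obtain a polytope whose vertices form an $\varepsilon$-net, and then smooth each vertex so that the curvature exceeds $q$ in a small region around it. Every ball on the resulting smooth convex surface then meets a region of curvature exceeding $q$, forcing $\overline{H} > q$ and contradicting membership in $\mathcal{N}_{p,q,r}$.

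The main obstacle, explicitly flagged by the authors themselves in the paragraph preceding the statement, is the mismatch between the Pompeiu-Hausdorff and Gromov-Hausdorff topologies. It is resolved by restricting attention to the residual subspace $\mathcal{S}^{0}$ of non-degenerate, conical-point-free surfaces: on this subspace, $d_{PH}$-convergence of convex surfaces gives Gromov-Hausdorff convergence of their intrinsic metrics, so the machinery of Section~4 applies without change. The secondary technical point is that the polyhedral and smoothing constructions of Lemmas \ref{LDP}, \ref{LDR} and \ref{LDCP} must be realized by actual convex surfaces in $\mathbb{R}^{3}$; this is furnished by Alexandrov's existence theorem together with the fact that the smoothing of Lemma \ref{LDR} produces a metric of non-negative curvature on the sphere. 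Beyond these two points, the entire proof of Theorem \ref{g} in the case $\kappa=0$ transfers with only notational changes, which accounts for the authors' remark that the proof here is a \emph{simplified} version of the proof of Theorem \ref{g}.
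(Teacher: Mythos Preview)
The paper gives no explicit proof of this theorem; it only remarks that the proof is a simplified version of the proof of Theorem~\ref{g}, and notes that the Pompeiu-Hausdorff topology on $\mathcal{S}$ differs from the Gromov-Hausdorff one. Your proposal correctly identifies and addresses both points: you replace Theorem~\ref{sing} by the Klee-Gruber result to obtain the residual subspace $\mathcal{S}^{0}$, and you invoke the classical fact that PH-convergence of convex bodies implies GH-convergence of their intrinsic metrics in order to transport Lemmas~\ref{LCrv} and~\ref{LSCH}. The decomposition into the sets $\mathcal{M}_{p,q,r}$, $\mathcal{N}_{p,q,r}$ and the proof of their closedness then go through verbatim, as you say.

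The one place where your argument is more complicated than necessary---and where there is a small gap as written---is the empty-interior step. You construct the approximating surfaces \emph{intrinsically} (smooth a polytope via Lemma~\ref{LDR}, then invoke Alexandrov's existence theorem to realize the resulting metric in $\mathbb{R}^{3}$). Existence is fine, but you still need the realization to be \emph{Pompeiu-Hausdorff close} to the original polytope, not merely Gromov-Hausdorff close; this requires the continuity of Alexandrov's realization map (up to rigid motion) with respect to the intrinsic metric, a true but nontrivial fact you did not name. The simplification the authors presumably intend is to work \emph{extrinsically} throughout: convex polytopes are PH-dense in $\mathcal{S}$; one can round the edges and vertices of a polytope directly in $\mathbb{R}^{3}$ to obtain a nearby smooth convex surface with large flat regions (for $\mathcal{M}_{p,q,r}$), and one can first add $\varepsilon$-dense vertices by the convex-hull trick of Lemma~\ref{LDCP} and then round them to small spherical caps of curvature exceeding $q$ (for $\mathcal{N}_{p,q,r}$). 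These extrinsic constructions give PH-closeness for free and bypass both Alexandrov's theorem and Lemma~\ref{LDR}, which is exactly what makes the convex case simpler.
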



\bigskip

\noindent\textbf{Acknowledgement.} The authors are indebted to Tudor
Zamfirescu for suggesting the topic of this paper and for very useful
discussions, especially concerning the curvatures of convex surfaces.
Thanks are due to the referee, for useful remarks.

The second and third authors were supported by the grant
PN-II-ID-PCE-2011-3-0533 of the Romanian National Authority for Scientific
Research, CNCS-UEFISCDI.

\bigskip

\bigskip

Jin-ichi Itoh

\noindent{\small Faculty of Education, Kumamoto University \newline Kumamoto
860-8555, JAPAN \newline j-itoh@gpo.kumamoto-u.ac.jp}

\medskip

J\"oel Rouyer

\noindent{\small Institute of Mathematics ``Simion Stoilow'' of the Romanian
Academy, \newline P.O. Box 1-764, Bucharest 70700, ROMANIA \newline
Joel.Rouyer@ymail.com, Joel.Rouyer@imar.ro}

\medskip

Costin V\^{\i}lcu

\noindent{\small Institute of Mathematics ``Simion Stoilow'' of the Romanian
Academy, \newline P.O. Box 1-764, Bucharest 70700, ROMANIA \newline
Costin.Vilcu@imar.ro}

\end{document}